\newtheorem{Thm}{Theorem}[section]
\newtheorem{Lemm}{Lemma}[section]
\newtheorem{Rem}{Remark}[section]
\newtheorem{Co}{Corollary}[section]
\theoremstyle{definition}
\let\le=\leqslant
\let\ge=\geqslant
\let\leq=\leqslant
\let\geq=\geqslant
\begin{document}
\baselineskip 12pt

\begin{center}
\textbf{\Large On the Asymptotics of the Connectivity Probability of Erdos-Renyi Graphs} \\

\vspace{1.5cc}
{ B.B. Chinyaev$^{1}$, A.V. Shklyaev$^{2}$}\\
\vspace{0.3 cm}

{\small $^{1}$Lomonosov Moscow State University, bchinyaev.msu@gmail.com \\
 $^{2}$Lomonosov Moscow State University,  ashklyaev@gmail.com}
\end{center}
\vspace{1.5cc}

\begin{abstract}
In this paper, we investigate the exact asymptotic behavior of the connectivity probability in the Erdős–Rényi graph $G(n,p)$, under different asymptotic assumptions on the edge probability $p=p(n)$. We propose a novel approach based on the analysis of inhomogeneous random walks to derive this probability. We show that the problem of graph connectivity can be reduced to determining the probability that an inhomogeneous random walk with Poisson-distributed increments, conditioned to form a bridge, is actually an excursion.

\vspace{0.95cc}
\parbox{24cc}{{\it \textbf{Keywords:} Erdős–Rényi graph, connectivity probability, exact asymptotics, random walks}}
\end{abstract}

\section{Introduction} \label{intro}
The Erdős–Rényi random graph model was originally introduced in \cite{erdds1959random} and \cite{erdos1960evolution}. In this model, a graph \(G\) is considered with vertex set \(V=\{1,\dotsc, n\}\) and an adjacency matrix \(C\) whose entries \(c_{i,j}\) (for \(i<j\)) are independent and identically distributed Bernoulli random variables with parameter \(p=p(n)\).

A review of some of the results related to this model can be found in \cite{van2024random} and \cite{райгородский2010модели}. We are particularly interested in the asymptotic behavior of the connectivity probability \(P_n(p)\) of the graph as \(n\to\infty\) and \(p(n)\to 0\). We now recall several known results (see \cite{stepanov1970probability}) on this problem:

\begin{enumerate}[1)]
\item Suppose that $p(n)=(\ln n + \alpha + o(1))/n$, with $\alpha>0$. Then
\begin{equation}\label{res1}
P_n(p) = e^{-e^{-\alpha}} (1 + o(1)), \quad n\to\infty.    
\end{equation}
\item Suppose that $p(n)=c/n$, with $c>0$. Then 
\begin{equation}\label{res2}
P_n(p) = \left(1 - \frac{c}{e^c - 1}\right) \left(1-(1-c/n)^n\right)^n  (1 + o(1)), \quad n\to\infty.    
\end{equation}
\item Suppose that $p(n) = o(1/n^2)$ as $n\to \infty$. Then
\begin{equation}\label{res3}
P_n(p) = n^{n-2} p^{n-1} (1 + o(1)), \quad n\to\infty.    
\end{equation}
\end{enumerate}

The methods employed in these works rely on combinatorial estimates. In this paper, we propose a new approach for studying the connectivity probability of the Erdős–Rényi random graph. We show that the problem of determining the connectivity probability can be reduced to assessing whether a particular bridge, constructed using an inhomogeneous random walk, forms an excursion. Unfortunately, no convenient existing results of this type for inhomogeneous random walks are available in the literature, so we derive the necessary results independently.

It is worth noting that the obtained representation of the connectivity probability in terms of inhomogeneous random walks is non-asymptotic and uniformly applicable for any relation between \(p(n)\) and \(n\).

The paper is organized as follows. In Section~\ref{prelim}, we present some preliminary material. In particular, Section~\ref{general case} is devoted to the main lemma needed to derive the connectivity probability. Section~\ref{main theorem} states the main theorem. Sections~\ref{positivity edges}, \ref{Approximation lemm}, and \ref{positivity mid} contain the proofs of the necessary auxiliary lemmas, and Section~\ref{main proof} presents the proof of the main theorem.

\section{Preliminaries} \label{prelim}
In order to determine the connectivity probability, we need some additional constructions.

\subsection{Graph Exploration as a Random Walk}
To determine the connected component of a vertex \(v\) in a graph, a certain graph exploration process is used. In this process, the vertices can be \textit{active}, \textit{inactive}, or \textit{examined}. At the initial moment, the starting vertex \(v\) is designated as \textit{active}, while all other vertices are \textit{inactive}. Then, at each step, one \textit{active} vertex is selected (in the first step the starting vertex is chosen), and all of its inactive neighbors are added to the set of active vertices, while the vertex itself is moved into the set of examined vertices. The process continues as long as there remain active vertices, and the final set of examined vertices constitutes the connected component \(\mathcal{C}(v)\). The specific choice of the active vertex at each step is not essential (for instance, one may assume that the vertex which was added first to the active set is selected).

We consider this process (see also \cite{райгородский2010модели}, \cite{karp1990transitive}, \cite{nachmias2010critical}) in the random graph \(G(n,p)\). Let \(A_t\) denote the number of active vertices and \(U_t\) the number of inactive vertices at the beginning of step \(t\); denote by \(W_t\) the number of vertices that are transferred to the set of active vertices at that step, noting that the number of examined vertices coincides with the step number \(t\). We assume \(A_1 = 1, U_1 = n-1\), and hence
\[
A_{t+1} = A_t + W_t - 1, \quad U_{t+1} = U_{t} - W_t.
\]

Since the edges in the graph \(G(n,p)\) are independent, the random variables \(W_t\) at each step are binomially distributed:
\begin{equation*} 
	\mathbf{P}\left(W_t = k  \ | A_{t}=l,U_{t}=m\right) = 
	\left\{\begin{array}{ll}
            \binom{m}{k} p^k (1-p)^{m-k}, &  A_{t} > 0, \\
            0, &  A_{t} = 0.\\
        \end{array} \right.
\end{equation*}
For the graph to be connected, it is necessary that at each step (until step \(n\)) there remains at least one active vertex, i.e. 
\[
A_t = 1 + \left(\sum_{\tau=1}^t W_{\tau}\right) - t >0, \quad t<n.
\]

Consequently, the connectivity probability of the graph can be written in terms of this process as follows:
\begin{equation}\label{Pcon}
    P_n (p) = \sum\limits_{(j_{1} ,\dots , j_{n}) \in J_n} \ 
    \prod_{t=1}^{n}
    \left( \binom{n-1-j_{1}-\dotsb-j_{t-1}}{j_{t}} p^{j_{t}} (1-p)^{j_{t+1}+\dotsb+j_{n}} \right), 
\end{equation}
where
\begin{equation*}
J_n = \left\{(j_{1} ,\dots , j_{n}):  \sum_{i=1}^k j_i \geq k, \ k<n, \
                 \sum_{i=1}^{n} j_i = n-1\right\}.
\end{equation*}

In particular, we will consider the case \(p = p(n) \to 0\) as \(n \to \infty\), in which the expression (\ref{Pcon}) may become exponentially small. To find the asymptotics in this case, we will transform this expression into a more convenient form.

\subsection{Expression of Graph Connectivity via an Inhomogeneous Random Walk}\label{general case}
In this section, we reduce the problem of determining the connectivity of the graph $G(n,p)$ to the problem of the non-negativity of an inhomogeneous Poisson random walk conditioned to form a bridge. Unlike the expression in (\ref{Pcon}), which is formulated in terms of the positivity of dependent random variables, we consider a random walk with independent but non-identically distributed steps.

\begin{Lemm}\label{connect}
Let \(G(n,p)\) be an Erdős–Rényi graph. Then the connectivity probability is given by
$$
P_n (p) = \left(1-(1-p)^{n}\right)^{n-1} \mathbf{P}(S_k \geq 0, \ 0< k < n\mid S_{n} = -1 ), 
$$
where \(S_k = \sum_{i=1}^{k} X_i\) and the \(X_i\) are independent random variables such that \(X_i + 1\sim Poiss\left(\lambda_i\right)\), with 
 $$\lambda_i = \frac{np}{1-(1-p)^{n}}(1-p)^{(i-1)}.$$
\end{Lemm}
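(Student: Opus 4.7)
The plan is to verify the identity algebraically by transforming both sides of the claimed formula into the same combinatorial sum over $J_n$. The starting point is the explicit expression (\ref{Pcon}) on the left, and a direct expansion of the Poisson probabilities on the right; the bridge between them is provided by two elementary simplifications of the summand in (\ref{Pcon}).

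First I would clean up the left-hand side. Writing $R_t = n-1-(j_1+\dots+j_{t-1})$, so that $R_{n+1}=0$ because $\sum j_i = n-1$, the product of binomial coefficients telescopes:
\[
\prod_{t=1}^{n}\binom{R_t}{j_t} = \prod_{t=1}^{n}\frac{R_t!}{j_t!\,R_{t+1}!}
= \frac{(n-1)!}{\prod_{t=1}^{n} j_t!}.
\]
Next, reversing the order of summation in the double sum in the exponent of $(1-p)$ gives
\[
\sum_{t=1}^{n}(j_{t+1}+\dots+j_n) = \sum_{i=1}^{n}(i-1)\,j_i,
\]
and since $\sum_t j_t = n-1$, the total power of $p$ is $n-1$. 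Putting these together, (\ref{Pcon}) becomes
\[
P_n(p) = (n-1)!\,p^{n-1}\sum_{(j_1,\dots,j_n)\in J_n}\prod_{i=1}^{n}\frac{(1-p)^{(i-1)j_i}}{j_i!}.
\]

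Now I would attack the right-hand side by setting $W_i := X_i+1\sim\mathrm{Poiss}(\lambda_i)$, so that the event $\{S_k\geq 0,\,0<k<n\}\cap\{S_n=-1\}$ becomes exactly $\{(W_1,\dots,W_n)\in J_n\}$. A short computation shows
\[
\Lambda := \sum_{i=1}^{n}\lambda_i = \frac{np}{1-(1-p)^n}\cdot\frac{1-(1-p)^n}{p} = n,
\]
so $\sum W_i\sim\mathrm{Poiss}(n)$ and the conditioning factor is $\mathbf{P}(S_n=-1)^{-1} = (n-1)!\,e^{n}/n^{n-1}$. Writing out the joint Poisson mass function and using
\[
\prod_{i=1}^{n}\lambda_i^{j_i} = \left(\frac{np}{1-(1-p)^{n}}\right)^{n-1}\prod_{i=1}^{n}(1-p)^{(i-1)j_i},
\]
the right-hand side of the lemma becomes
\[
(1-(1-p)^n)^{n-1}\cdot\frac{(n-1)!}{n^{n-1}}\left(\frac{np}{1-(1-p)^n}\right)^{n-1}\!\sum_{J_n}\prod_{i=1}^{n}\frac{(1-p)^{(i-1)j_i}}{j_i!},
\]
which collapses to $(n-1)!\,p^{n-1}$ times the same sum obtained above.

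I do not expect any genuine obstacle here; the whole statement is a rearrangement identity. The two points one has to get right are the telescoping of the binomial product (which relies on $R_{n+1}=0$, i.e.\ on the constraint $\sum j_i = n-1$) and the reordering of the nested sum of exponents of $(1-p)$ that produces the weights $(i-1)j_i$. Once these two bookkeeping steps are done, the identity $\sum\lambda_i = n$ makes the Poisson normalization cancel against $(1-(1-p)^n)^{n-1}$ and the factor $p^{n-1}$ emerge cleanly, giving the exact matching of both sides for every $p\in(0,1)$ and every $n$.
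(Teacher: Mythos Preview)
Your proof is correct and follows essentially the same approach as the paper: both simplify the product in (\ref{Pcon}) to $(n-1)!\,p^{n-1}\sum_{J_n}\prod_i (1-p)^{(i-1)j_i}/j_i!$ via the same telescoping and exponent-reordering, and then match this to the Poisson representation using the key identity $\sum_i\lambda_i=n$. The only cosmetic difference is that the paper introduces an auxiliary parameter $q$ and then fixes it to $np/(1-(1-p)^n)$, whereas you work directly with the specified $\lambda_i$ and expand both sides toward the common sum; the underlying algebra is identical.
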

\begin{proof}
We transform the expression (\ref{Pcon}):
\begin{multline}\label{Connectivity1}
    P_n(p) = \sum\limits_{(j_{1} ,\dots , j_{n}) \in J_n} \
    \prod_{t=1}^{n}
    \left( \binom{n-1-j_{1}-\dotsb-j_{t-1}}{j_{t}} p^{j_{t}} (1-p)^{j_{t+1}+\dotsb+j_{n}} \right) =
\\
    = p^{n-1} (n-1)!
     \sum\limits_{(j_{1} ,\dots , j_{n}) \in J_n} \
    \prod_{t=1}^{n}
    \left( \frac{(1-p)^{(t-1)j_{t}}}{j_{t}!}
    \right).
\end{multline} 
We transform (with arbitrary $q>0$) the terms in the right-hand side of (\ref{Connectivity1}) into the form 
\begin{equation}
    \label{PoissonRepr}
    \exp\left(q\sum_{t=1}^{n} (1-p)^{t-1}\right) q^{-n+1}
    \prod_{t=1}^{n}
    \left( \exp\left(-q(1-p)^{t-1}\right) \frac{q^{j_t}(1-p)^{(t-1)j_{t}}}{j_{t}!}
    \right).
\end{equation}
Let \(X_t \sim Poiss(q(1-p)^{t-1})\); then 
$$
\exp\left(-q(1-p)^{t-1}\right) \frac{q^{j_t}(1-p)^{(t-1)j_{t}}}{j_{t}!} = \mathbf{P}(X_t = j_t).
$$
Set
$$q = \frac{np}{1-(1-p)^{n}} = n \left(\sum_{t=1}^{n} (1-p)^{t-1}\right)^{-1}.$$

\noindent Hence, the quantities in (\ref{PoissonRepr}) can be rewritten in the form
\begin{equation} \label{sum_term}
    \exp\left(n\right) 
    \left( \frac{1-(1-p)^{n}}{np}\right)^{n-1} 
    \
    \prod_{t=1}^{n}
    \left( \exp\left(-\lambda_t\right) \frac{\lambda_t^{j_{t}}}{j_{t}!}
    \right),
\end{equation}
\\
where \(\lambda_t = q(1-p)^{(t-1)}\). Substituting the expression (\ref{sum_term}) into (\ref{Connectivity1}), we obtain
$$  
    \left(1-(1-p)^{n}\right)^{n-1}
    \frac{\exp(n) (n-1)!}{n^{n-1} }
    \sum\limits_{(j_{1} ,\dots , j_{n}) \in J_n} \
    \prod_{t=1}^{n}
    \left( \exp\left(-\lambda_t\right) \frac{\lambda_t^{j_{t}}}{j_{t}!}
    \right).
$$
The resulting sum can be written as
$$
\mathbf{P}(S_k \geq 0, \ 0< k < n, \ S_{n} = -1 ),
$$
where \(S_k = \sum_{i=1}^{k} X_i\) and \(X_i + 1\sim Poiss\left(\lambda_i\right)\). It remains to note that
$$\mathbf{P}(S_{n} = -1) = \exp(-n) \frac{n^{n-1} }{(n-1)!} .$$ 
This completes the proof of Lemma \ref{connect}.
\end{proof}

The proven lemma allows us to study the connectivity probability of a graph for various parameters $p$. To do this, we need to compute the probability of the non-negativity of a random walk with independent and non-identical distributed steps, conditioned on returning to $-1$ at the end of the trajectory. An example of such a random walk $S_k$ is shown in Fig. \ref{ex1}.
It is important to note that the first step of the random walk $S_k$ has a positive mean, but with each subsequent step, this mean decreases, eventually becoming negative.

\begin{figure}[H]
    \centering
    \includegraphics[width=0.8\linewidth, height= 0.4\linewidth]{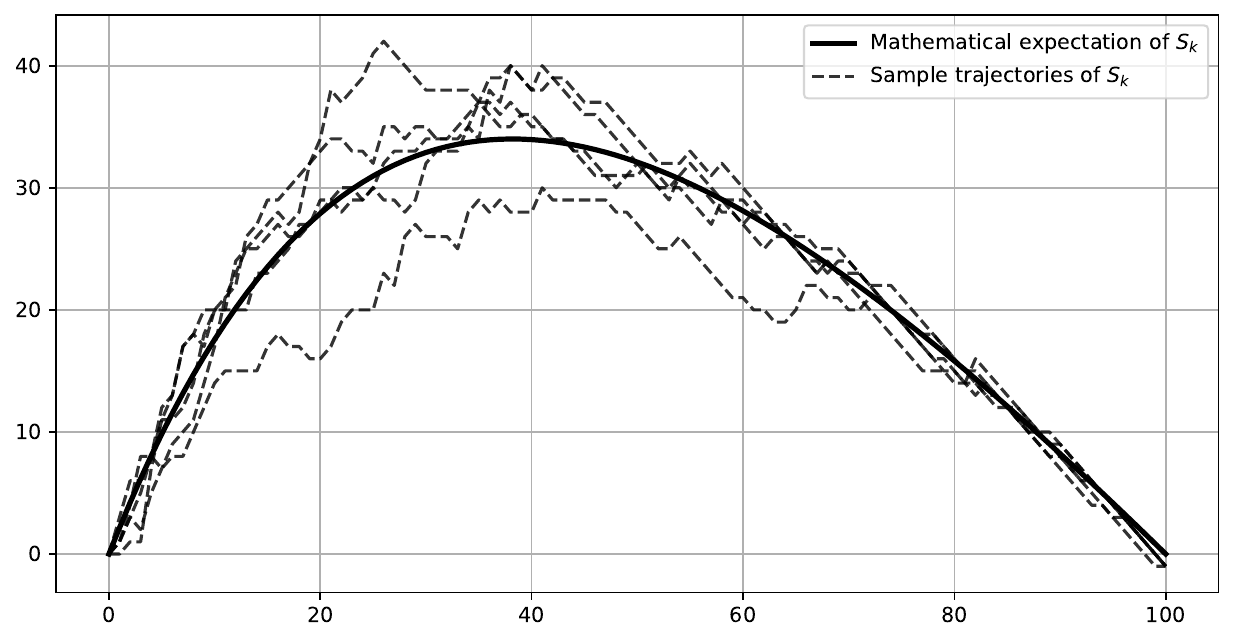} 
    \captionsetup{justification=centering}
    \caption{Graph of the mathematical expectation and sample realizations of \(S_k\) \\ for \(n=100\), \(p = 3/n\).}
    \label{ex1}
\end{figure}

In the theory of random walks, non-negative trajectories are commonly referred to as "meanders", trajectories that return to zero are called "bridges", and those that return to zero for the first time at the final step are known as "excursions". Hence, the problem of determining the graph's connectivity is reduced to studying the asymptotic behavior of the conditional probability that a bridge is an excursion.

\subsection{Main Result}\label{main theorem}
Using the representation obtained in Lemma \ref{connect}, we describe the asymptotics of the connectivity probability of the graph \(G(n,p)\) for various behaviors of the parameter \(p = p(n)\) as \(n \to \infty\). The results are summarized in the following theorem.

\begin{Thm}[On the Connectivity Probability of a Graph]\label{main res}
Let \(G(n,p)\) be a graph in the Erdős–Rényi model with edge probability \(p = c_n/n\), and let \(P_n(p)\) denote the probability that the graph \(G(n,p)\) is connected.
\begin{enumerate}[1)]
\item 
Suppose that \(c_n \to +\infty\) as \(n \to \infty\). Then
\begin{equation}\label{my_res1}
P_n(p) \sim  \left( 1-\left( 1-\frac{c_n }{n}\right)^{n}\right)^{n-1},\quad 
n\to \infty.
\end{equation}
\item
Suppose that \(c_n \to c \in (0,+\infty)\) as \(n \to \infty\). Then
\begin{equation}\label{Ptherem}
P_n(p) \sim (1-e^{-c})\left(1- \frac{c \ e^{-c}}{ 1-e^{-c}}\right)\left(1-\left( 1-\frac{c}{n}\right)^{n}\right)^{n-1}, \quad n \to \infty,
\end{equation}
\item  
Suppose that \(c_n=o(1)\) and, moreover, \(c_n n^{1/2}/\ln n \to+\infty\) as \(n\to\infty\). Then 
\begin{equation}\label{my_res3}
P_n(p) \sim \frac{1}{2} c_n^2 \left( 1-\left( 1-\frac{c_n }{n}\right)^{n}\right)^{n-1},\quad 
n\to \infty.
\end{equation}
\item 
Suppose that \(c_n=o(1/n)\). Then
\begin{equation}\label{my_res4}
P_n(p) \sim \frac{1}{n} \left( 1-\left( 1-\frac{c_n }{n}\right)^{n}\right)^{n-1} \sim \frac{c_n^{n-1}}{n} ,\quad 
n\to \infty.
\end{equation}
\end{enumerate}
\end{Thm}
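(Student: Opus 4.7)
By Lemma~\ref{connect} the theorem reduces to estimating the conditional probability
$Q_n := \mathbf{P}(S_k \geq 0,\ 0 < k < n \mid S_n = -1)$
and multiplying by the explicit factor $(1-(1-p)^n)^{n-1}$. A direct calculation using $\sum_{i=1}^n \lambda_i = n$ shows that the unconditional walk satisfies $\mathbf{E} S_n = 0$ and that its expected trajectory $\mathbf{E} S_k = n(1-(1-p)^k)/(1-(1-p)^n) - k$ is a concave arch that is strictly positive on $(0,n)$, with positive drift near $k=0$ and negative drift near $k=n$. My plan is to split the index set into a left edge $[0,t_n]$, a bulk $[t_n,n-t_n]$, and a right edge $[n-t_n,n]$, with a regime-dependent cutoff $t_n$, so that each of the three auxiliary lemmas announced in the introduction controls exactly one piece.

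In the bulk the arch-shaped expectation dominates the $O(\sqrt n)$ fluctuations in regimes 1--3, so the ``positivity mid'' lemma of Section~\ref{positivity mid} rules out any dip below zero; this step is deconditioned from the bridge using the local-CLT estimate $\mathbf{P}(S_n = -1) \asymp n^{-1/2}$ furnished by the Approximation lemma of Section~\ref{Approximation lemm}. The edges are then handled by the ``positivity edges'' lemma of Section~\ref{positivity edges}. In regime 1 ($c_n \to \infty$) one has $\lambda_1 - 1 \sim c_n$, so $S_1$ is already at altitude $\Theta(c_n)$ and cannot revisit $0$, giving $Q_n \to 1$. In regime 2 ($c_n \to c$) the left-edge walk couples to an i.i.d.\ walk with step $\mathrm{Poiss}(c/(1-e^{-c}))-1$, whose probability of ever hitting $-1$ is the smallest positive root of $\eta = e^{-\lambda_1(1-\eta)}$, which one can verify explicitly to be $\eta = e^{-c}$, producing the factor $1-e^{-c}$; a time-reversal of the bridge gives the analogous factor $1 - c e^{-c}/(1-e^{-c})$ at the right edge. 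Regime 3 is a uniform refinement of regime 2 as $c_n \to 0$, whose edge product collapses to $1 - (1+c_n)e^{-c_n} \sim c_n^2/2$ by Taylor expansion. Regime 4 ($c_n = o(1/n)$) is handled separately: the $\lambda_i$ agree with $1$ uniformly up to $o(1/n)$, so $S_k$ is, up to a vanishing correction, an i.i.d.\ $\mathrm{Poiss}(1)-1$ bridge, and the cycle lemma for downward-skip-free bridges yields $Q_n = 1/n + o(1/n)$.

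The main obstacle will be regime 2, the only regime in which the limiting constant is a non-trivial function of the parameter. There the finite, non-homogeneous, bridge-conditioned walk must be replaced by an infinite i.i.d.\ walk without conditioning, and this substitution has to be made quantitative enough to respect an edge window of size $t_n \to \infty$. Two ingredients are essential for this and should be packaged into the Approximation lemma: a local limit theorem on the scale $n^{-1/2}$ that allows deconditioning from the bridge over the edge, and a time-reversal of the bridge that interacts correctly with the asymmetric Poisson increments (whose reversed distribution is not Poisson). A secondary technical difficulty is continuity between regimes: ensuring that the regime-2 constants degenerate into the regime-3 answer $c_n^2/2$ as $c_n \to 0$, and that the regime-4 cycle-lemma argument is robust enough to absorb the $o(1/n)$ drift perturbation of the $\lambda_i$.
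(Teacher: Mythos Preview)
Your plan matches the paper's architecture almost exactly: reduce via Lemma~\ref{connect}, split into left edge / bulk / right edge, kill the bulk with the ``positivity mid'' lemma, handle the edges by coupling to i.i.d.\ Poisson walks, and treat regime~4 by the cycle lemma. The edge constants you compute for regime~2 and their degeneration to $c_n^2/2$ in regime~3 are exactly what the paper obtains.

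Two points where your outline diverges from the paper and where your version is weaker. First, you misread what the ``Approximation lemma'' of Section~\ref{Approximation lemm} is: it is \emph{not} a local CLT for deconditioning (those estimates are done ad hoc inside the proof via Stirling and a Borovkov local limit theorem), but a stochastic comparison between two Poisson bridges with different intensity profiles, proved through the conditional-uniformity property of Poisson process jump times. This comparison is precisely what drives regimes~1 and~4: the paper shows that the excursion probability of the bridge is monotone in the convexity of the cumulative intensities, so regime~1 follows immediately from regime~2 by comparing $c_n$ to a large fixed $c$ and letting $c\to\infty$. Your direct argument ``$S_1$ is at altitude $\Theta(c_n)$ and cannot revisit $0$'' is fragile because it ignores the bridge conditioning $S_n=-1$; the monotonicity route bypasses this entirely. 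The same comparison lemma gives the lower bound $Q_n\ge 1/n$ in regime~4 by comparing to the homogeneous $\lambda_i\equiv 1$ bridge, and the matching upper bound is obtained by a coupling $S_k^*=\widetilde S_k+\widehat S_k$ with $\widehat S_n\sim\mathrm{Poiss}(o(1))$, rather than by a perturbative cycle-lemma argument.

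Second, the time-reversal issue you flag is real but is not solved by reversing the bridge. The paper simply reverses the \emph{unconditioned} increments on the right edge, $\widetilde X_i=-X_{n-i+1}$, so that $1-\widetilde X_i\sim\mathrm{Poiss}(\lambda_{n,n-i+1})$, and proves a separate meander lemma (Lemma~\ref{right_lemma}) for walks with steps of the form $1-\mathrm{Poiss}(\gamma)$, $\gamma<1$. No ``reversed Poisson bridge'' is needed because the local-limit factorisation over the bulk already decouples the two edges.
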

\begin{Rem}
The results (\ref{my_res1}), (\ref{Ptherem}), and (\ref{my_res4}) correspond to the already known asymptotics (\ref{res1}), (\ref{res2}), and (\ref{res3}). However, the relation (\ref{my_res3}) appears to be a new result.
\end{Rem}

By virtue of Lemma \ref{connect}, the proof of Theorem \ref{main res} reduces to the analysis of the probability
\begin{equation*}
\mathbf{P}(S_k \geq 0, \ 0< k < n\mid S_{n} = -1 ) = 
\frac{\mathbf{P}(S_k \geq 0, \ 0< k < n, S_{n} = -1 )}{\mathbf{P}(S_{n} = -1 )},
\end{equation*}
where \(S_k = \sum_{i=1}^{k} X_i\), the \(X_i\) are independent with \(X_i + 1\sim Poiss\left(\lambda_{n,i}\right)\), and \(S_i + i \sim Poiss(\eta_{n,i})\),
 \begin{equation} 
 \label{lambda}
 \lambda_{n,i} = \frac{c_n }{b_n} \left( 1-\frac{c_n }{n}\right)^{i-1}, \quad b_n = 1-\left( 1-\frac{c_n }{n}\right)^{n},
 \end{equation}
\begin{equation*}
\eta_{n,i} = \sum_{j=1}^i \lambda_{n,j}  = \frac{1\ -\left( 1-c_n /n \right)^{i}}{b_n} n.
\end{equation*}

To proceed, we will require some results concerning inhomogeneous random walks.

\section{Auxiliary Results}\label{Results}

\subsection{Probability That a Homogeneous Walk Forms a Meander}
\label{positivity edges}
In this section we find the probabilities of positivity for some random walks with identically distributed steps.

\begin{Lemm}\label{lemma1}
Let $(Y_1,\dotsc, Y_i, \dotsc)$ be a sequence of independent identically distributed random variables, $Y_1 + 1 \sim Poiss(1)$, and let $k\in\{1,2,\dotsc, n\}$ be an arbitrary parameter. Define $S_n = (k-1) + \sum_{i=1}^n Y_i $. Then
\begin{equation}\label{lemmma1_result}
    \mathbf{P}\left(S_i \geq 0 , i<n, S_{n} = -1 \right) = \frac{k}{n}\mathbf{P}\left(S_{n} = -1 \right).
\end{equation} 
\end{Lemm}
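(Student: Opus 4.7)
This identity has the shape of a classical first-passage / cycle-lemma formula for random walks whose steps are bounded below by $-1$. My plan is to pass to a dual walk with steps bounded above by $1$, apply a time-reversal, and then invoke the Dvoretzky--Motzkin cycle lemma.

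First, I would set $Z_i := Y_i + 1 \sim \mathrm{Poiss}(1)$ and introduce the dual walk $\xi_i := -Y_i = 1 - Z_i \le 1$ with partial sums $T_i := \sum_{j=1}^i \xi_j = (k-1) - S_i$. Under this change of variables the event $\{S_i \ge 0,\ 0 < i < n,\ S_n = -1\}$ becomes $\{T_i < k \text{ for all } 0\le i < n,\ T_n = k\}$ (using integrality: $S_i \ge 0 \iff S_i > -1 \iff T_i < k$), i.e., the walk $T$ first reaches level $k$ at step $n$. I would then reverse time by setting $\xi'_i := \xi_{n+1-i}$ and $T'_i := \sum_{j=1}^i \xi'_j$; a direct computation gives $T'_i = T_n - T_{n-i} = k - T_{n-i}$, so the first-passage event is equivalent to $\{T'_i > 0 \text{ for all } 1 \le i \le n,\ T'_n = k\}$. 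Since the $\xi_j$ are i.i.d., the reversed sequence has the same joint law as the original, yielding
\[
\mathbf{P}(S_i \ge 0,\ 0 < i < n,\ S_n = -1) = \mathbf{P}(T_i > 0,\ 1 \le i \le n,\ T_n = k).
\]

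The final step is the Dvoretzky--Motzkin cycle lemma: for any integer sequence $(a_1,\dots,a_n)$ with $a_i \le 1$ and $\sum_{i=1}^n a_i = k \ge 1$, exactly $k$ of the $n$ cyclic rotations have strictly positive partial sums throughout. Conditional on $T_n = k$, the joint law of $(Z_1,\dots,Z_n)$ is multinomial with equal parameters, hence the joint law of $(\xi_1,\dots,\xi_n)$ is exchangeable and, in particular, invariant under cyclic permutation. Averaging the cycle-lemma count over rotations therefore gives $\mathbf{P}(T_i > 0,\ 1 \le i \le n \mid T_n = k) = k/n$, and multiplying by $\mathbf{P}(T_n = k) = \mathbf{P}(S_n = -1)$ delivers \eqref{lemmma1_result}.

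The argument is essentially bookkeeping. The only delicate points are keeping track of strict versus weak inequalities (handled by integrality of $S_i$) and invoking the cycle lemma in the correct direction after the reversal; I do not anticipate any substantive obstacle, and in particular no quantitative estimate on the Poisson distribution is needed at this stage.
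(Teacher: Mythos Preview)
Your argument is correct. The paper does not actually prove this lemma; it simply cites Tak\'acs's book (page~33), where this ballot-type identity is a standard result. What you have written is essentially the classical proof of that result: pass to the dual walk with steps $\le 1$, reduce by time reversal to the event that all partial sums are strictly positive, and conclude via the Dvoretzky--Motzkin cycle lemma together with the cyclic exchangeability of the conditioned Poisson increments. Your bookkeeping on the strict/weak inequalities and on the reversal $T'_i = T_n - T_{n-i}$ is accurate, and the exchangeability step (multinomial conditioning implies rotation invariance, hence the expected number $k$ of ``good'' rotations turns into the conditional probability $k/n$) is exactly the right way to finish.

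So your proposal is correct and follows the same route as the reference the paper invokes; there is no genuine divergence to discuss, only that you have supplied the details the paper omits.
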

\noindent
This result can be found in the book \cite{такач1971комбинаторные} on page 33.

\begin{Lemm}\label{left_lemma}
Let $(Y_1,\dotsc, Y_i, \dotsc)$ be a sequence of independent identically distributed random variables, $Y_i + 1 \sim Poiss(\gamma )$, $\gamma >1$, 
$S_n = \sum_{i=1}^n Y_i $.  
\begin{enumerate}[1)]
\item Then
\begin{equation}\label{left_lemmma_result}
    \mathbf{P}\left(S_k \geq 0 , k>0 \right) = 1 + \frac{1}{\gamma } W_0\left( -\frac{\gamma  }{e^{\gamma  }}\right), 
\end{equation} 
where $W_0(x)$ is the Lambert function, i.e. a function from $(-1/e,0)$ to $(-1,+\infty)$ such that for $x>-1$ the equality $W_0(xe^x)=x$ holds.
\\
\item Let $\gamma=\gamma _n=1+d_n$, $n^{-1/2} \ln n<d_n$, $d_n=o(1)$, 
$m_n$: $m_n d_n^2/\ln d_n\to-\infty$, as $n\to\infty$. Then
$$
\mathbf{P}(S_k\geq 0, 0<k<m_n)\sim 2 d_n,\ n\to\infty.
$$
\end{enumerate}
\end{Lemm}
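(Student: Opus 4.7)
\textbf{Proof plan for Lemma \ref{left_lemma}.}

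\emph{Part 1.} I would let $T_{-1}=\inf\{k\geq 1:S_k=-1\}$ and set $q=\mathbf{P}(T_{-1}<\infty)$. Since $Y_i\geq -1$, the walk is skip-free downward, so if $Y_1=j\geq -1$ then reaching $-1$ requires descending through $j+1$ consecutive unit levels, which by the strong Markov property are independent copies of $T_{-1}$. Conditioning on $Y_1$ gives $q=\mathbf{E}\,q^{Y_1+1}=e^{\gamma(q-1)}$, equivalently $-\gamma q\,e^{-\gamma q}=-\gamma e^{-\gamma}$. Positive drift $\gamma-1>0$ excludes $q=1$; moreover $e^{1-\gamma}<1/\gamma$ for $\gamma>1$ (since $\gamma<e^{\gamma-1}$), so the nontrivial root lies in $(0,1/\gamma)$, placing $-\gamma q\in(-1,0)$ in the range of the principal branch. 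Hence $-\gamma q=W_0(-\gamma e^{-\gamma})$, giving $\mathbf{P}(S_k\geq 0,\ k>0)=1-q=1+\gamma^{-1}W_0(-\gamma e^{-\gamma})$.

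\emph{Part 2.} I would first Taylor-expand the formula of Part 1 near $\gamma=1$. Writing $\gamma_n=1+d_n$, the expansion $\gamma-\ln\gamma=1+d^2/2+O(d^3)$ gives $-\gamma_n e^{-\gamma_n}=-e^{-1}(1-d_n^2/2+O(d_n^3))$. The Puiseux expansion of $W_0$ at its branch point, obtained by setting $s=1+W_0(-e^{-1}+t)$ and solving $(1-s)e^s=1-et$, reads $W_0(-e^{-1}+t)=-1+\sqrt{2et}(1+o(1))$; substituting yields $W_0(-\gamma_n e^{-\gamma_n})=-1+d_n+O(d_n^2)$ and hence $1-q_n=2d_n+O(d_n^2)$. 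Setting $E_m=\{S_k\geq 0,\,0<k<m\}$, it remains to prove $\mathbf{P}(E_{m_n})-\mathbf{P}(E_\infty)=o(d_n)$. The lower bound is immediate; since skip-free downward implies $\{T_{-1}=\infty\}=E_\infty$, the difference equals $\mathbf{P}(E_{m_n},\,m_n\leq T_{-1}<\infty)$. The strong Markov property at time $m_n-1$ together with $\mathbf{P}_h(T_{-1}<\infty)=q_n^{h+1}$ gives, after splitting at $h_\ast=m_n d_n/2$,
$$\mathbf{P}(E_{m_n})-\mathbf{P}(E_\infty)\leq \mathbf{P}(S_{m_n-1}\leq h_\ast)+q_n^{h_\ast}.$$
The first term is handled by Cramér's inequality for sums of i.i.d.\ Poisson increments: the cumulant generating function $K(\theta)=-\theta+\gamma_n(e^\theta-1)$ yields a rate function $I$ with $I(d_n/2)\sim d_n^2/8$, so the bound is $\exp(-c\,m_n d_n^2)$. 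The second term equals $\exp(-m_n d_n^2(1+o(1)))$ by the expansion of $q_n$. Under the hypothesis $m_n d_n^2/\ln d_n\to-\infty$, meaning $m_n d_n^2\gg|\ln d_n|$, both are $o(d_n)$.

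\emph{Main obstacle.} The two delicate points are (i) identifying the principal branch $W_0$ in Part 1 via the strict inequality $q<1/\gamma$, and (ii) synchronizing the two tails in Part 2: the Cramér bound $\exp(-c\,m_n d_n^2)$ and the hitting bound $q_n^{m_n d_n/2}\approx\exp(-m_n d_n^2)$ must both be $o(d_n)$, which happens precisely when $m_n d_n^2\gg|\ln d_n|$. A weaker hypothesis on $m_n$ would leave one of the two tails at borderline order, so the stated condition is exactly the correct calibration for comparing the finite-horizon probability with its infinite-horizon limit.
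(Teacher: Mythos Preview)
Your argument is correct. For Part~1 you take a genuinely different route: instead of the paper's martingale $\{q^{S_n}\}$ and the optional stopping theorem, you use the skip-free downward structure and a first-step decomposition to derive the fixed-point equation $q=e^{\gamma(q-1)}$ directly. Both arrive at the same equation and the same identification of the root via $W_0$; your approach is slightly more elementary (no stopping-time machinery), while the paper's martingale argument has the advantage of also delivering, in the same stroke, the finite-horizon identity used in Part~2. For Part~2 the two proofs are structurally the same: both show $1-q_n=2d_n+O(d_n^2)$ (you via the Puiseux expansion of $W_0$ at the branch point, the paper via direct Taylor expansion of $e^{\gamma_n(q_n-1)}=q_n$), and both reduce the finite-horizon correction to the two tails $\mathbf{P}(S_{m_n}\le d_n m_n/2)$ and $q_n^{d_n m_n/2}$, bounded by the exponential Markov inequality and the asymptotics of $q_n$ respectively. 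The paper reaches this splitting via optional stopping at $\min(\tau,m_n)$, you via the strong Markov property at time $m_n-1$ and the identity $\mathbf{P}_h(T_{-1}<\infty)=q_n^{h+1}$; the resulting bounds are identical.
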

\begin{proof}
1) Consider the sequence $\{q^{S_n}\}$ and find such a $q\in (0,1)$ for which this is a martingale. For this, the relation 
$$
\mathbb{E} q^{Y_i} = 1
$$
must hold. 
Since $Y_i \sim Poiss(\gamma ) - 1$, we have
\begin{equation}\label{martingal_equation}
\mathbb{E} q^{Y_i} = \exp(\gamma (q-1))/q = 1,
\end{equation}
hence,
$$
\exp(-\gamma  q)(-\gamma  q) = -\gamma   e^{-\gamma }.
$$
Thus, 
$$
q = -\frac{1}{\gamma } W_0\left( -\frac{\gamma  }{e^{\gamma  }}\right) < 1.
$$
Let $\tau = \inf \{t>0: S_t = -1\}$ denote the first time the random walk reaches $-1$. Now consider our martingale at time $\tau_n = \min(\tau, n)$. By the optional stopping theorem $\mathbb{E} q^{S_{\tau_n}} = 1$. On the other hand,
$$
\mathbb{E} q^{S_{\tau_n}}  = \frac{1}{q} \mathbf{P}\left( \tau \le n \right) + \sum_{k=0}^{\infty} q^k \  \mathbf{P}\left(S_i \geq 0 , i \le n, S_n = k \right).
$$
\newpage
\noindent Note that
\begin{multline*}
\sum_{k=0}^{\infty} q^k \ \mathbf{P}\left(S_i \ge 0 , i \le n, S_n = k \right) \ \le \ \sum_{k=0}^{\infty} q^k \  \mathbf{P}\left( S_n = k \right) \le\\ \le q^{n^{1/3}} + \mathbf{P}\left( S_n \le n^{1/3} \right) 
= o(1), \quad n \to \infty.
\end{multline*}
Then, $\mathbf{P}\left( \tau \le n \right) \to q$ as $n \to \infty$, and hence $\mathbf{P}\left(S_k \geq 0 ,  k>0 \right) = 1-q$.
\\

2) Introduce, as in the previous part, the martingale $q_n^{S_n}$, where we define $q_n\in (0,1)$ as the solution of the equation
$$
{\bf E}q_n^{Y_i} = 1,
$$
given by the relation
$$
\exp(\gamma_n (q_n - 1)) = q_n.
$$
We will show that $q_n$ admits the representation
$$
q_n = 1 - 2 d_n + O(d_n^2),\ n\to\infty.
$$
Indeed, $q_n\to 1^-$, as $n\to+\infty$, since any limit point $z$ of the bounded sequence $\{q_n-1\}$ satisfies the equation $e^{z}=1+z$, which has no nonzero solutions due to the strict convexity of the exponential function. Consequently,
$$
\exp(\gamma_n(q_n -1)) = 1 + \gamma_n (q_n - 1) + \frac{1}{2} \gamma_n^2 (q_n-1)^2 + O\left(\left(q_n-1\right)^3\right),\ n\to\infty,
$$
hence
\begin{equation}
\label{ExpansionWLambert}
(q_n - 1)\left(d_n + \frac{\gamma_n^2 (q_n-1)}{2}+O\left(\left(q_n-1\right)^2\right)\right) = 0,\ n\to\infty. 
\end{equation}
Thus, $q_n = 1 + 2d_n + \varepsilon_n d_n$, as $n\to\infty$, where $\varepsilon_n\to 0$, as $n\to\infty$. Substituting this expression into the relation~(\ref{ExpansionWLambert}), we obtain $\varepsilon_n = O(d_n)$, as $n\to\infty$.

As before, by the optional stopping theorem applied at the stopping time $\min(\tau, m_n)$, we have
$$
1 = q_n^{-1} \mathbf{P}(\tau<m_n)+ \sum_{k=0}^{\infty} q_n^k \ \mathbf{P}(S_i\ge 0,i\le m_n, S_{m_n}=k),
$$
from which we get
$$
\mathbf{P}(S_k\geq 0, 0<k<m_n) = 1- q_n + q_n\sum_{k=0}^{\infty} q_n^k \ \mathbf{P}(S_i\ge 0,i\le m_n, S_{m_n}=k).
$$
Also, for $a_n=d_n m_n/2$, we have the inequalities
$$
\sum_{k=0}^{\infty} q_n^k \ \mathbf{P}(S_i\ge 0,i\le m_n, S_{m_n} =k)\le 
\mathbf{P}(S_{m_n} \le a_n) + q_n^{a_n}.
$$
Moreover, as $n\to\infty$ 
$$
q_n^{a_n}=\left(1-2d_n+O(d_n^2)\right)^{d_n m_n/2}=
e^{-d_n^2 m_n (1+o(1))}.
$$
\newpage
Since the condition $-d_n^2 m_n\le 2 \ln d_n$ holds for all sufficiently large $n$, it follows that $q_n^{a_n}$ is $o(d_n)$ as $n\to\infty$.
Also, by Markov's inequality for any positive $h$ the following estimates hold
\begin{gather*} 
\mathbf{P}(S_{m_n} \le a_n) = \mathbf{P}(-S_{m_n} \ge -a_n) \le \\
\le e^{h(m_n+a_n)} {\bf E}e^{-h(S_{m_n}+m_n)}=
e^{h(m_n+a_n)+\gamma_n m_n(e^{-h}-1)}. \nonumber
\end{gather*}
Moreover,
$$
h(m_n+a_n)+\gamma_n m_n(e^{-h}-1) \le h^2 \gamma_n m_n/2 - 
h m_n d_n/2.
$$
Choosing $h=d_n/2$, we obtain
$$
\mathbf{P}(S_{m_n} \le a_n)\le e^{-m_n d_n^2/8+d_n^3 m_n/8}\le e^{2\ln d_n}
$$ 
for all sufficiently large $n$, where the right-hand side is $o(d_n)$ as $n\to\infty$.
Thus,
$$
\mathbf{P}(S_k\geq 0, 0<k<m_n) = 1 - q_n + o(d_n) = 2 d_n + o(d_n),\ n\to\infty.
$$
This completes the proof of Lemma \ref{left_lemma}.
\end{proof}

Note that the formula (\ref{left_lemmma_result}) from Lemma \ref{left_lemma} simplifies if $\gamma $ has the form given below.
\begin{Co}\label{left co}
If $\gamma  = \lambda /\left( 1-e^{-\lambda }\right)$, 
$$
\mathbf{P}\left(S_k \geq 0 ,  k>0 \right) = 1-e^{-\lambda}.
$$
\begin{proof}
We verify the condition (\ref{martingal_equation}) for $q = e^{-\lambda}$, 
$$ \exp\left(\gamma (q-1)\right)/q =  \exp\left((\lambda/(1-e^{-\lambda }))(e^{-\lambda}-1)\right)/e^{-\lambda} =  \exp(-\lambda) / e^{-\lambda} = 1, $$
which is what needed to be proved.
\end{proof}
\end{Co}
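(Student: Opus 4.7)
The plan is to apply Lemma \ref{left_lemma}(1) directly and verify algebraically that the special value of $\gamma$ produces a root of the martingale equation with a closed-form expression (bypassing the Lambert $W$). Recall that in the proof of Lemma \ref{left_lemma}(1), the key quantity is the unique solution $q\in(0,1)$ of $\mathbf{E} q^{Y_i}=1$, which for $Y_i+1\sim Poiss(\gamma)$ reads
$$
\exp(\gamma(q-1))/q=1,
$$
and one obtains $\mathbf{P}(S_k\ge 0,\,k>0)=1-q$. So the task reduces to guessing the right $q$ and checking the equation.

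First I would verify the standing hypothesis of Lemma \ref{left_lemma}, namely $\gamma>1$. Since $1-e^{-\lambda}<\lambda$ for all $\lambda>0$ (a strict consequence of $e^{-\lambda}>1-\lambda$), we get $\gamma=\lambda/(1-e^{-\lambda})>1$, so the lemma applies. Next I would substitute $q=e^{-\lambda}\in(0,1)$ into the martingale equation:
$$
\frac{\exp\bigl(\gamma(e^{-\lambda}-1)\bigr)}{e^{-\lambda}}
=\frac{\exp\bigl(\tfrac{\lambda}{1-e^{-\lambda}}(e^{-\lambda}-1)\bigr)}{e^{-\lambda}}
=\frac{e^{-\lambda}}{e^{-\lambda}}=1.
$$
Thus $q=e^{-\lambda}$ is a root of $\exp(\gamma(q-1))/q=1$ lying in $(0,1)$, and by the uniqueness of such a root (which is exactly what the branch $W_0$ in formula (\ref{left_lemmma_result}) encodes), it coincides with the $q$ from Lemma \ref{left_lemma}. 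Therefore
$$
\mathbf{P}(S_k\ge 0,\,k>0)=1-q=1-e^{-\lambda}.
$$

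There is essentially no obstacle: the content of the corollary is the observation that the transcendental equation defining $q$ admits a trivial explicit solution for this parametrization of $\gamma$. The only minor thing to double-check would be that $e^{-\lambda}$ is the correct root in $(0,1)$ rather than the spurious root $q=1$ of $\exp(\gamma(q-1))=q$; but $q=1$ is always a trivial solution excluded by the condition $\gamma>1$, and $e^{-\lambda}<1$ confirms we have picked up the nontrivial branch $W_0$.
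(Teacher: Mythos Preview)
Your proof is correct and takes essentially the same approach as the paper: both verify directly that $q=e^{-\lambda}$ satisfies the martingale equation (\ref{martingal_equation}) and conclude $\mathbf{P}(S_k\ge 0,\,k>0)=1-q$. Your version is slightly more explicit in checking the hypothesis $\gamma>1$ and in noting that $e^{-\lambda}$ is the nontrivial root in $(0,1)$, but the idea is identical.
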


\begin{Lemm}\label{right_lemma} 
Let $(Y_1,\dotsc, Y_i, \dotsc)$ be a sequence of independent identically distributed random variables, $1 - Y_i \sim Poiss(\gamma), \gamma < 1$, $S_n = \sum_{i=1}^n Y_i$.  
\begin{enumerate}[1)]
\item Then
\begin{equation*}
    \mathbf{P}\left(S_k > 0 , k > 0 \right) = 1-\gamma .
\end{equation*} 
\item For $\gamma =1-d_n$, $n^{-1/2} \ln n < d_n$, $d_n=o(1)$, as $n\to\infty$, the following relation holds
$$
\mathbf{P}(S_k > 0, k\le m_n)\sim d_n ,\ n\to\infty,
$$
where $m_n: m_nd_n^2/\ln d_n\to -\infty$.
\end{enumerate}
\end{Lemm}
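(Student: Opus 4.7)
The plan is to convert the positivity probability into the explicit expectation $\mathbf{E}[S_n^+]/n$ via Kemperman's cycle lemma for skip-free upward walks, then read off part~1 from the law of large numbers and part~2 from an exponential Chebyshev bound. The starting observation is that $1 - Y_i \sim Poiss(\gamma)$ is a non-negative integer, so $Y_i \le 1$, making $S_n$ a right-continuous (skip-free upward) random walk with positive drift $\mathbf{E} Y_1 = 1 - \gamma$.

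First I would note that for such walks Kemperman's cycle lemma gives, for every $k \ge 1$,
\[
\mathbf{P}(S_i < k, \ 0 \le i < n, \ S_n = k) = \frac{k}{n}\,\mathbf{P}(S_n = k).
\]
Reversing time $(Y_1,\ldots,Y_n)\mapsto(Y_n,\ldots,Y_1)$, which preserves the joint distribution, converts the event on the left into $\{S_i > 0, \ 1 \le i \le n, \ S_n = k\}$; summing over $k \ge 1$ yields the key identity
\[
\mathbf{P}(S_k > 0, \ 1 \le k \le n) = \frac{\mathbf{E}[S_n^+]}{n}.
\]
For part~1 I would let $n \to \infty$: the SLLN gives $S_n/n \to 1 - \gamma > 0$ almost surely, and uniform integrability of $(S_n^+/n)$, coming from the exponential moment of the Poisson distribution, gives $\mathbf{E}[S_n^+]/n \to 1 - \gamma$. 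The events $\{S_k > 0, 1 \le k \le n\}$ decrease to $\{S_k > 0, k > 0\}$, which yields the claim.

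For part~2 I would apply the identity at $n = m_n$ and, since $\mathbf{E}[S_{m_n}] = m_n d_n$, obtain
\[
\mathbf{P}(S_k > 0, \ k \le m_n) = \frac{\mathbf{E}[S_{m_n}^+]}{m_n} = d_n + \frac{\mathbf{E}[S_{m_n}^-]}{m_n},
\]
so the required relation becomes $\mathbf{E}[S_{m_n}^-] = o(m_n d_n)$. A Chernoff bound with $h = d_n$, after the Taylor expansion $-h + \gamma_n(e^h - 1) = -d_n^2/2 + O(d_n^3)$ (precisely of the kind already used in the proof of Lemma~\ref{left_lemma}), produces
\[
\mathbf{P}(S_{m_n} \le -t) \le e^{-d_n t}\exp\bigl(-m_n d_n^2/2\cdot(1+o(1))\bigr);
\]
summing over $t \ge 1$ under the hypothesis $m_n d_n^2/\ln d_n \to -\infty$ (equivalently, $m_n d_n^2 \gg |\ln d_n|$) makes $\mathbf{E}[S_{m_n}^-]/(m_n d_n)$ vanish. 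The main obstacle is the cycle-lemma / time-reversal step itself: it is the step specific to the skip-free upward structure, and once it yields the scalar identity $\mathbf{P}(S_k > 0, k \le n) = \mathbf{E}[S_n^+]/n$, both parts reduce to standard limit/tail analysis.
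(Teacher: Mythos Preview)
Your argument is correct and takes a genuinely different route from the paper's.

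The paper handles part~1 by observing that, since $S_n\to+\infty$ almost surely and the walk is skip-free upward, the events $\{S_k>0,\ \forall k\}$ and $\{S_k\ne 0,\ \forall k\}$ agree almost surely; the number $N_0$ of returns to~$0$ is therefore geometric, and its mean is computed explicitly as $\sum_{k\ge 1}e^{-\gamma k}(\gamma k)^k/k!=\gamma/(1-\gamma)$ via the Stirling-type identity $\sum_{k=1}^{j}\binom{j}{k}k^{j}(-1)^{j-k}=j!$. Part~2 is then deduced from part~1 by bounding $\mathbf{P}(\exists\, i>m_n:\ S_i\le 0)$ with a union bound and a Chernoff estimate on each $\mathbf{P}(Z_i\ge i)$, $Z_i\sim Poiss(i\gamma_n)$.

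Your approach replaces all of this by a single exact identity. Applying the cycle lemma (already invoked in the paper as Lemma~\ref{lemma1}, from Tak\'acs) to the negated walk and then reversing time yields
\[
\mathbf{P}(S_k>0,\ 1\le k\le n)=\frac{\mathbf{E}[S_n^{+}]}{n},
\]
valid for every finite~$n$. Part~1 then follows from $\mathbf{E}[(S_n/n)^{+}]\to(1-\gamma)^{+}$ by the law of large numbers together with uniform integrability, and part~2 from $\mathbf{E}[S_{m_n}^{+}]=m_n d_n+\mathbf{E}[S_{m_n}^{-}]$ and a single Chernoff bound on $\mathbf{P}(S_{m_n}\le -t)$. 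This is tidier: it avoids the combinatorial series identity in part~1 and the union bound over $i>m_n$ in part~2, and in fact your bound for $\mathbf{E}[S_{m_n}^{-}]$ only requires $m_n d_n^2\to\infty$, slightly less than the stated hypothesis $m_n d_n^2/|\ln d_n|\to\infty$. The paper's approach, on the other hand, keeps part~1 self-contained (no limiting argument) and makes the connection to the renewal structure at~$0$ explicit.
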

\begin{proof}
1) Let us find the probability of strict positivity of the walk 
$$P_0 = \mathbf{P}\left(S_k > 0 , k>0 \right).$$
Note that the number of returns to $0$ (denote it by $N_0$) has a geometric distribution. We can determine its parameter by computing its expectation:
$$ \mathbb{E} N_0 = 
\sum_{k=1}^{+\infty} \mathbf{P}(S_k = 0) = \sum_{k=1}^{+\infty} e^{-\gamma  k} \ ( k\gamma  )^{k} /k!\ =\sum_{k=1}^{+\infty}\left( e^{-\gamma  } k\gamma  \right)^{k} /k! \ .
$$
The evaluation of this sum is given in \cite{такач1971комбинаторные} on page 78 as an exercise. For completeness, we solve this exercise and show that $ \mathbb{E} N_0 = \gamma / (1-\gamma)$. Note that
\begin{gather*}
\sum _{k=1}^{\infty }\frac{\gamma  ^{k} k^{k}}{k!} e^{-\gamma  k} =\sum _{k=1}^{\infty }\sum _{i=0}^{\infty }\frac{\gamma  ^{k} \ k^{k}}{k!\ } \ \frac{( -\gamma )^{i} \ k^{i}}{i!} =
\\ = \sum _{j=1}^{\infty }\frac{\gamma ^{j}}{j!} \ \sum _{k=1}^{j}\frac{j!\ k^{j} \ ( -1)^{j-k}}{k!\ ( j-k) !}  = \sum _{j=1}^{\infty }\frac{\gamma ^{j}}{j!} \ \sum _{k=1}^{j} \binom{j}{k}  k^{j}  ( -1)^{j-k} .
\end{gather*}
Using the identity
$
\sum _{k=1}^{j} \binom{j}{k} k^{j}  ( -1)^{j-k} = j!,
$
we obtain $\mathbb{E} N_0 = \gamma/(1-\gamma )$. Hence, the parameter of our geometric distribution is $\gamma$, therefore
$$
\mathbf{P}\left(S_k > 0 , \forall k \right) = 1 - \gamma.
$$
Thus, $P_0 = 1-\gamma$.
\\

2) Note that by part 1
$$
\mathbf{P}(S_k\ge 0,k\le m_n) = 
d_n - \mathbf{P}(\exists i>m_n: S_i<0).
$$
However, 
$$
\mathbf{P}(\exists i>m_n: S_i<0) \le 
\sum_{i=m_n}^{\infty} \mathbf{P}(Z_i\ge i),
$$
where $Z_i\sim Poiss(i(1-d_n))$. By Markov's inequality for any positive $h$ the following holds
$$
\mathbf{P}(Z_i\ge i)\le e^{-hi} {\bf E}e^{hZ_i} = 
e^{i(1-d_n)(e^h-1) - hi}.
$$
For $h=d_n/2$ we obtain
$$
(1-d_n)(e^h-1) - h = 
-d_n^2/2 + d_n^2/8+O(d_n^3) = 
-3 d_n^2/8+ O(d_n^3),
$$
For sufficiently large $n$, the right-hand side is bounded above by $-d_n^2/4$, hence 
$$
\sum_{i=m_n}^{\infty} \mathbf{P}(Z_i\ge i)\le 
\frac{e^{-d_n^2 m_n/4}}{1-e^{-d_n^2/4}} \le 32\ d_n^{-2} e^{4\ln d_n}.
$$
The right-hand side of the above expression is $o(d_n)$ as $n\to\infty$. Thus, the lemma is proved.
\end{proof}

\subsection{Lemma on the Comparison of Poisson Bridges}\label{Approximation lemm}
\begin{Lemm}
\label{StocDomLemm}
Let $X_{i,l}\sim Poiss(\gamma_{i,l})$, $i\le n$, $l=1,2$, be independent, where 
\begin{equation}
\label{GammaCond}
\frac{\sum_{i=1}^{j} \gamma_{i,1}}{\sum_{i=1}^{n}\gamma_{i,1}}
\ge \frac{\sum_{i=1}^{j} \gamma_{i,2}}{\sum_{i=1}^{n}\gamma_{i,2}},\ j\le n.
\end{equation}
The variables $X_{i,l}$ define the random walks
$$
S_{j,1} = \sum_{i=1}^{j} X_{i,1}, \ \ S_{j,2} = \sum_{i=1}^{j} X_{i,2}.
$$
Then, for any $x_j$, $j\le n$, and any $y$ from $\mathbb{N}\cup\{0\}$, the following inequality holds
\begin{equation}\label{GammaCond2}
\mathbf{P}\left(\left.S_{j,1}\ge x_j, \ j\le n\right|S_{n,1} = y\right)\ge 
\mathbf{P}\left(\left. S_{j,2}\ge x_j, \ j\le n\right| S_{n,2} = y\right).
\end{equation}

\end{Lemm}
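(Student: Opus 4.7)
The plan is to reduce the claim to a deterministic pointwise comparison on a single probability space, via the standard Poisson--multinomial identity. The starting observation is that if the $X_{i,l}$ are independent Poisson with parameters $\gamma_{i,l}$, then conditionally on $S_{n,l}=y$ the vector $(X_{1,l},\dots,X_{n,l})$ is multinomially distributed with $y$ trials and category probabilities $p_{i,l}=\gamma_{i,l}/\Lambda_l$, where $\Lambda_l=\sum_{i=1}^{n}\gamma_{i,l}$. Writing $P_{j,l}=\sum_{i=1}^{j}p_{i,l}$, hypothesis (\ref{GammaCond}) is exactly the pointwise inequality $P_{j,1}\ge P_{j,2}$ for every $j\le n$, with equality at $j=n$.

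Next, I would realise both conditional laws on the \emph{same} probability space by introducing $y$ i.i.d.\ uniform variables $U_1,\dots,U_y$ on $[0,1]$ and setting
\[
\tilde S_{j,l}\;=\;\#\{k\le y:\ U_k\le P_{j,l}\},\qquad l=1,2,\ j\le n.
\]
Assigning trial $k$ to category $i$ whenever $P_{i-1,l}<U_k\le P_{i,l}$ is the textbook inverse-CDF construction of a multinomial sample, so the joint law of $(\tilde S_{j,l})_{j\le n}$ coincides, for each fixed $l$, with the conditional joint law of $(S_{j,l})_{j\le n}$ given $S_{n,l}=y$.

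Finally, the inequalities $P_{j,1}\ge P_{j,2}$ yield the set inclusion $\{k:\ U_k\le P_{j,2}\}\subseteq\{k:\ U_k\le P_{j,1}\}$, and hence $\tilde S_{j,1}\ge\tilde S_{j,2}$ for every $j\le n$ almost surely \emph{simultaneously}. Therefore, for any deterministic levels $x_j$,
\[
\{\tilde S_{j,2}\ge x_j,\ j\le n\}\;\subseteq\;\{\tilde S_{j,1}\ge x_j,\ j\le n\},
\]
and taking probabilities on the coupled space gives (\ref{GammaCond2}).

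I do not foresee a real obstacle: the only non-mechanical step is recognising that (\ref{GammaCond}) is exactly the cumulative-probability ordering needed for a shared-uniform coupling, after which the statement follows from a single monotonicity remark. The case $y=0$ is trivial (all cumulative sums vanish), and the assumption that neither $\Lambda_l$ vanishes is implicit in the conditioning.
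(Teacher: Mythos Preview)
Your proposal is correct and follows essentially the same route as the paper: both reduce to the fact that, conditional on $S_{n,l}=y$, the partial sums $S_{j,l}$ are distributed as $\#\{k\le y:\ U_k\le P_{j,l}\}$ for i.i.d.\ $U[0,1]$ variables $U_1,\dots,U_y$, after which the cumulative-ratio hypothesis gives a pointwise coupling. The paper phrases this via the Poisson-process representation (jump times of a conditioned Poisson process being order statistics of uniforms) rather than the Poisson--multinomial identity, but the underlying argument is identical.
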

\begin{proof}
Consider the Poisson processes $N_t^l$ with intensities $\lambda_l$,
$$
\lambda_l = \sum_{i=1}^{n} \gamma_{i,l}, \quad
t_{j,l} = \frac{\sum_{i=1}^{j} \gamma_{i,l}}{\sum_{i=1}^{n} \gamma_{i,l}},\quad j\le n,\ l=1,2.
$$
Then 
$$
\left(S_{j,l}, \ j\le n\right)\stackrel{d}{=} (N_{t_{1,l}}^l,N_{t_{2,l}}^l,\dotsc, N_{t_{n,l}}^l),\quad l\in \{1,2\}.
$$
Hence, 
\begin{equation*}
\begin{gathered}
\mathbf{P}\left(\left.S_{j,1}\ge x_j, \ j\le n\right|S_{n,1} = y\right) =
\mathbf{P}\left(\left.N_{t_{j,1}}^1\ge x_j, \ j\le n\right|
N_{1}^1=y\right)=\\
= \mathbf{P}\left( \left. \tau_{x_j}^1<t_{j,1},\ j\le n \right| N_1^1 = y \right)= 
\mathbf{P}(R_{x_j}<t_{j,1},\ j\le n),
\end{gathered}
\end{equation*}
where $\tau_i^l$ are the points (jump times) of the Poisson process corresponding to $(N_t^l)$, and $R_i$, $i\le y$, denote the order statistics of $y$ independent $U[0,1]$ random variables. In the last equality we used the conditional property of the Poisson process ~\cite{grimmett2020probability}. The inequality
$$
\mathbf{P}(R_{x_j}<t_{j,1},\ j\le n)\ge 
\mathbf{P}(R_{x_j}<t_{j,2},\ j\le n)
$$
follows immediately from the definition of $t_{j,l}$ and condition~(\ref{GammaCond}). This completes the proof of Lemma \ref{StocDomLemm}.
\end{proof}

\begin{Rem}\label{StocDomRem}
    Note that equality in expression (\ref{GammaCond}) implies equality of the corresponding conditional probabilities (\ref{GammaCond2}).
\end{Rem}
\subsection{Inequality for the Probability of the Inhomogeneous Random Walk Hitting -1}\label{positivity mid}

In this section we prove a lemma that allows us to estimate the probability that the process reaches $-1$ at a step far from both the beginning and the end.

\begin{Lemm}\label{mid_lemma}
Let $n\ge 3$, and let $S_k = \sum_{i=1}^{k} X_i$, where $X_i$ are independent random variables and $X_i+1\sim Poiss\left(\lambda_{n,i}\right)$, where $\lambda_{n,i}$ are given by the relation~(\ref{lambda}).
\begin{enumerate}[1)]
    \item Then for any natural $m<n/2$ and $1\leq c_n<n$, the following inequality holds
\begin{equation*}
\mathbf{P}\left(\exists i\in [m,n-m]: S_i=-1 \mid S_{n} = -1\right) \le 400 \cdot 0.99^{m}.
\end{equation*}
\item
If $c_n < 1$, then the following inequality holds
\begin{equation*}
\mathbf{P}\left(\exists i\in [m,n-m]: S_i=-1 \mid S_{n}= -1\right) \le \frac{500}{c_n^2 \sqrt{ m}} \exp\left(-\frac{m c_n^2}{200}\right).
\end{equation*}

\end{enumerate}
\end{Lemm}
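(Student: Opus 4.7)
The plan is to convert the problem, via a union bound over $i \in [m, n-m]$, into a pointwise estimate on $f(i) := \mathbf{P}(S_i = -1 \mid S_n = -1)$, and then to control each $f(i)$ by a Chernoff/Stirling bound on a binomial pmf. Since the $X_j$ are independent, splitting the walk at time $i$ together with $S_i + i \sim \mathrm{Poiss}(\eta_{n,i})$ and $S_n - S_i + (n-i) \sim \mathrm{Poiss}(n - \eta_{n,i})$ yields the closed-form expression
\[
f(i) = \binom{n-1}{i-1}\alpha_i^{\,i-1}(1-\alpha_i)^{n-i}, \qquad \alpha_i := \eta_{n,i}/n,
\]
i.e.\ the pmf of a $\mathrm{Bin}(n-1,\alpha_i)$ random variable at $i-1$. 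Setting $\beta_i := (i-1)/(n-1)$, the concavity of $i \mapsto \alpha_i$ together with $\alpha_1 > \beta_1 = 0$ and $\alpha_n = \beta_n = 1$ forces $\alpha_i \ge \beta_i$ throughout, so $i-1$ lies at or below the mean $(n-1)\alpha_i$. Stirling's formula then gives $f(i) \le C_0 \sqrt{(n-1)/((i-1)(n-i))}\, \exp(-(n-1) D(\beta_i \| \alpha_i))$, where $D$ is the binary KL divergence (the endpoints $i \in \{1, n-1\}$ are handled via the exact pmf to produce the same-type bound). The only remaining task is to lower-bound $(n-1) D(\beta_i \| \alpha_i)$ in each of the two regimes; the key elementary inequality I will use is $D(\beta\|\alpha) \ge \beta(r - 1 - \log r)$ for $\alpha \ge \beta$ and $r = \alpha/\beta$, which follows from $u \log(u/v) \ge u - v$ applied with $u = 1-\beta$, $v = 1 - \alpha$.

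In Case~1 ($c_n \ge 1$) the key observation is that for $i \le n/2$ the ratio $r_i = \alpha_i/\beta_i$ is uniformly bounded below: indeed $r_i \ge c_n/b_n \ge c_n/(1-e^{-c_n}) \ge 1/(1-e^{-1}) \approx 1.58$, so $\varphi(r_i) := r_i - 1 - \log r_i \ge \kappa_1 > 0$ uniformly in $c_n \in [1,n)$. This gives $(n-1) D(\beta_i \| \alpha_i) \ge \kappa_1 (i-1)$, hence $f(i) \le C\rho^{\,i-1}$ with $\rho = e^{-\kappa_1} < 1$. A symmetric argument on the right (applying the same inequality to the pair $(1-\beta_i,1-\alpha_i)$, for which the ratio $(1-\beta_i)/(1-\alpha_i)$ is uniformly bounded below by $(e^{c_n}-1)/c_n \ge e-1$) yields $f(i) \le C\rho^{\,n-i}$ for $i \ge n/2$. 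Summing the two geometric bounds produces $\sum_{i=m}^{n-m} f(i) \le (2C/(1-\rho))\rho^m$, which after tuning constants yields $\le 400 \cdot 0.99^m$.

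In Case~2 ($c_n < 1$), the rate is $c_n$-dependent and one must be more careful. A Taylor expansion in $c_n/n$ gives
\[
\alpha_i - \beta_i = \frac{c_n\, i(n-i)}{2n^2}\bigl(1 + O(c_n)\bigr) + \frac{n-i}{n(n-1)},
\]
and in the only non-trivial regime $m c_n^2 \gtrsim 1$ the first term dominates uniformly on $[m,n-m]$. The sharper bound $D(\beta\|\alpha) \ge (\alpha-\beta)^2/(2\alpha(1-\alpha))$ together with $\alpha_i(1-\alpha_i) \asymp i(n-i)/n^2$ yields $(n-1) D(\beta_i \| \alpha_i) \ge c_n^2 i(n-i)/(16 n)$. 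Splitting the sum by symmetry at $n/2$, using $i(n-i)/n \ge i/2$ for $i \le n/2$, and converting to an integral via $u = c_n^2 i/16$ (together with $\int_a^\infty u^{-1/2} e^{-u}\,du \le a^{-1/2} e^{-a}$ for $a \ge 1$) produces $\sum_{i=m}^{n-m} f(i) \le (C_1/(c_n^2 \sqrt{m}))\, e^{-c_n^2 m/16}$, which after constant tuning matches the stated bound. \textbf{The main obstacle} is to make these KL-divergence estimates quantitative and uniform: in Case~1 one must verify that the rate $\log(1/\rho)$ stays uniformly positive as $c_n$ varies over $[1,n)$ (worst near $c_n = 1$), and in Case~2 one must simultaneously control the lower-order correction $(n-i)/(n(n-1))$ in the expansion of $\alpha_i-\beta_i$ and the contribution from $i \asymp n/2$ (where the Stirling prefactor is largest) so that neither ruins the integral bound.
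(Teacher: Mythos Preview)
Your overall strategy---union bound over $i$, then Stirling/Chernoff on each term---is exactly the paper's. Recognising $f(i)=\binom{n-1}{i-1}\alpha_i^{i-1}(1-\alpha_i)^{n-i}$ as a binomial pmf is a clean repackaging, and the resulting exponent $(n-1)D(\beta_i\|\alpha_i)$ is equivalent to the paper's $i\,\psi(\eta_{n,i}/i)$ with $\psi(x)=\ln x+1-x=-\varphi(x)$. However, two of your quantitative inputs are wrong and the argument does not go through as written.

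In Case~1 your claim $r_i=\alpha_i/\beta_i\ge c_n/b_n$ for $i\le n/2$ is false: $c_n/b_n=\lambda_{n,1}$ is the \emph{largest} increment parameter, whereas $\alpha_i/\beta_i\approx \eta_{n,i}/i$ is the running average of the $\lambda_{n,j}$, which decreases in $i$ and is \emph{below} $\lambda_{n,1}$. At $i=n/2$ one gets $r_i\approx 2/(1+(1-c_n/n)^{n/2})$, which for $c_n=1$ and large $n$ is about $1.24$, not $1.58$. (The chain $c_n/b_n\ge c_n/(1-e^{-c_n})$ is also the wrong direction, since $(1-c_n/n)^n\le e^{-c_n}$ gives $b_n\ge 1-e^{-c_n}$.) The same mistake occurs on the right: $(1-\beta_i)/(1-\alpha_i)\ge (e^{c_n}-1)/c_n$ is the value at $i=n-1$, not a uniform bound; at $i=n/2$ the ratio is again about $1.32$ for $c_n=1$. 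The fix is precisely what the paper does: the worst case is $i=n/2$, giving $r_i\ge 2/(1+e^{-1/2})\ge 6/5$, whence $\varphi(r_i)\ge \varphi(6/5)=1/5-\ln(6/5)>0.01$ and one may take $\rho=e^{-\varphi(6/5)}\le 0.99$.

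In Case~2 your inequality $D(\beta\|\alpha)\ge (\alpha-\beta)^2/(2\alpha(1-\alpha))$ is not valid in general (try $\alpha=0.9$, $\beta=0.5$): the second-order remainder involves $\max_{\xi\in[\beta,\alpha]}\xi(1-\xi)$, which can exceed $\alpha(1-\alpha)$. What \emph{is} true, and suffices, is the one-sided pair $D(\beta\|\alpha)\ge (\alpha-\beta)^2/(2\alpha)$ and $D(\beta\|\alpha)\ge (\alpha-\beta)^2/(2(1-\beta))$, each following from $\varphi(r)\ge (r-1)^2/(2r)$ for $r\ge 1$ applied to your inequality $D\ge \beta\,\varphi(\alpha/\beta)$ (resp.\ its mirror $D\ge (1-\beta)(\log s+1/s-1)$ with $s=(1-\beta)/(1-\alpha)$). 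Using the first for $i\le n/2$ and the second for $i\ge n/2$ gives $(n-1)D(\beta_i\|\alpha_i)\gtrsim c_n^2\min(i,n-i)$, after which your integral estimate goes through unchanged.
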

\begin{proof}
We use the inequalities
$$
\sqrt{2\pi i}(i/e)^i \le i!\le 2\sqrt{2\pi i}(i/e)^i,
$$
which hold for all $i\ge 1$. 
Note that
\begin{equation}\label{P_S_n=-1}
\mathbf{P}(S_{n}=-1) = \frac{n^{n-1} e^{-n}}{(n-1)!} =\frac{n^{n} e^{-n}}{n!}\ge \frac{1}{2\sqrt{2\pi n}}.
\end{equation}
Consider the probability
$$\mathbf{P}\left(\exists i\in [m,n-m]: S_i=-1 , \ S_{n}=-1\right).$$
We bound it from above by the sum
\begin{equation}\label{mid}
\sum_{i=m}^{n-m} \mathbf{P}(S_{i}=-1) \mathbf{P}(S_{n}-S_i=0) .
\end{equation}
We use the relations 
$
S_{i} + i \sim Poiss\left(\eta_{n,i}\right),\quad S_{n} - S_{i} + (n-i) \sim  Poiss\left(n-\eta_{n,i}\right).
$
Then for $\displaystyle m \le i\le n/2$ we obtain
\begin{multline}\label{mid_left}
  \mathbf{P}( S_{i} = -1) = \exp(-\eta_{n,i})\frac{( \eta_{n,i})^{i-1}}{(i-1)!}
  \le  
 \exp(-\eta_{n,i})\frac{( \eta_{n,i})^{i}}{i!}
 \le
 \\
\le  
 \frac{( \eta _{n,i} /i)^{i} }{\sqrt{2\pi i}\exp( \eta_{n,i}-i)}
 \le \frac{1}{\sqrt{2\pi i}}\left(\frac{\eta_{n,i} /i}{\exp( \eta_{n,i} /i  - 1) }\right)^{i} \le \ \frac{h_1(c_n)^i}{\sqrt{2\pi i}},
\end{multline}
where 
$$h_1(c_n):=\max_{m\le i\le n/2} \left(\frac{\eta_{n,i} /i }{\exp( \eta_{n,i} /i - 1) }\right) = \max_{m\le i\le n/2} \exp(\psi(\eta_{n,i}/i)),$$
with $\psi(x) = \ln x + 1 - x$, $x>0$. The same estimates yield the inequality 
\begin{equation}
\label{IneqBadi}
\mathbf{P}(S_i=-1)\le \frac{1}{\sqrt{2\pi i}}
\end{equation}
for an arbitrary $i$. Note that $\psi(x)$ attains its maximum, equal to zero, at $x=1$, decreases on the interval $(1,+\infty)$ and increases on $(0,1)$. Since the value $\eta_{n,i} /i$ is the arithmetic mean of $\lambda_{n,j}$, $j=1, \dotsc,i$, and the $\lambda_{n,j}$ are monotonically decreasing, the minimum of $\eta_{n,i} /i$ for $i \le n/2$ is attained at $i = n/2$. Consequently,
$$
\underset{m\le i\le n/2 }{\min} \ \eta _{n,i} /i\ =
2\frac{1-\left( 1-c_n/n\right)^{n/2}}{1-\left( 1-c_n/n\right)^{n}}= 
\frac{2}{1+\left( 1-c_n/n\right)^{n/2}}>1,
$$
from which it follows that
\begin{equation}
\label{left_average}
\max_{m\le i\le n/2} \psi\left(\eta_{n,i}/i\right)\le \psi\left(\frac{2}{1+\left( 1-c_n/n\right)^{n/2}}\right).
\end{equation}
For $c_n\ge 1$ the right-hand side of (\ref{left_average}) is bounded by
$$
\psi\left(\frac{2}{1+\left( 1-c_n/n\right)^{n/2}}\right)\le \psi\left(\frac{2}{1+e^{-1/2}}\right) \leq \psi(6/5)=\ln (6/5)-1/5,
$$
where in the first step we used the inequalities
$$
\left(1-\frac{c_n}{n}\right)^{n}\le \left(1-\frac{1}{n}\right)^{n} = \left(1+\frac{1}{n-1}\right)^{-n} \le e^{-1}.
$$
In the last inequality we used the monotonicity of the sequence $(1+(n-1)^{-1})^n$, as proved, for example, in Example 13 of Section 1, Chapter III of the book~\cite{зорич1997математический}.
For $c_n<1$, the right-hand side of (\ref{left_average}) is bounded by
\begin{multline}
\psi\left(\frac{1}{1-c_n/4 + c_n^2/16}\right)\le - \frac{1}{4}\left(\frac{1}{1-c_n/4 + c_n^2/16}-1\right)^2 = \\ 
= - \frac{1}{4}\left(\frac{c_n/4 - c_n^2/16}{1-c_n/4 + c_n^2/16} \right)^2 \le
-\frac{1}{4}\left(\frac{3c_n}{16}\right)^2
\le -\frac{c_n^2}{200},
\end{multline}
where we used the inequalities
$$
(1-x)^j\le 1 - xj + j^2 x^2/2,\quad 
\psi(1+x)\le -\frac{x^2}{4},
$$
which hold for all $x\in [0,1]$ and $j\ge 2$. Hence,
\begin{eqnarray}
h_1(c_n)\le \exp(\ln(6/5)-1/5) \leq 0.99,\quad c_n\ge 1, \label{mid_h1}\\ 
h_1(c_n)\le \exp(-c_n^2/200),\quad c_n\le 1. \label{mid_h2}
\end{eqnarray}
Similarly, for $n/2 \le i\le n -m$:
\begin{equation}\label{mid_right}
\mathbf{P}( S_{n} - S_{i} = 0) \leq \frac{1}{\sqrt{2\pi (n-i)} }\exp\left((n-i) \psi\left(\frac{n-\eta_{n,i}}{n-i}\right)\right)\le \frac{h_{2}(c_n)^{n-i}}{\sqrt{2\pi (n-i)}},
\end{equation}
where 
$$
h_{2}(c_n) =\max_{n/2 \le i\le n -m} \exp\left(\psi\left(\frac{n-\eta_{n,i}}{n-i}\right)\right).
$$
Moreover, as before,
\begin{equation}
\label{IneqBadi2}
\mathbf{P}(S_{n}-S_i=0)\le \frac{1}{\sqrt{2\pi (n-i)}}
\end{equation}
for all $i\in (n/2,n-m)$. 
Note that 
$$
\max_{n/2 \le i\le n-m} \frac{n-\eta_{n,i}}{n-i} \le 2 - 2\frac{1-\left( 1-c_n/n\right)^{n/2}}{1-\left( 1-c_n/n\right)^{n}} = 2- 2\frac{1}{1+\left( 1-c_n/n\right)^{n/2}} .
$$
The same estimates as before show that
$$
h_2(c_n)\le
\exp(\psi(4/5)) = \exp(\ln (4/5) + 1/5)\le 0.99
$$
for $c_n\ge 1$, and for $c_n<1$
$$
h_2(c_n)\le \exp\left(-\frac14\left(1-\frac{1}{1-c_n/4 + c_n^2/16}\right)^2\right)\le -\frac{c_n^2}{200}.
$$
Therefore, applying to (\ref{mid}) for $m \le i \le n/2$ the estimates (\ref{mid_left}) and (\ref{IneqBadi2}), and for $n/2 < i \leq n-m$ the inequalities (\ref{mid_right}) and (\ref{IneqBadi}), 
we obtain the inequality
\begin{multline}\label{mid_lem_sum}
\mathbf{P}(\exists i\in [m,n-m]: S_i=-1\mid S_{n}=-1)\le \\ \le 2\sum_{i=m}^{n/2}\frac{2 \sqrt{n}}{\sqrt{2\pi i(n-i)}} h_1(c_n)^i
\le \frac{4}{\sqrt{\pi m}}\sum_{i=m}^{n/2} h_1(c_n)^i.
\end{multline}
Thus, using (\ref{mid_h1}), for $c_n\ge 1$ we obtain the inequality
\begin{equation*}
\mathbf{P}\left(\exists i\in [m,n-m]: S_i= -1 \mid S_{n}= -1\right)
\le 4\sum_{i=m}^{n/2} (0.99)^i \le 400\cdot 0.99^m.
\end{equation*}
For $c_n<1$, using (\ref{mid_h2}), the right-hand side of (\ref{mid_lem_sum}) is bounded by
$$
\frac{4\exp\left(-m c_n^2/200\right)}{\sqrt{2\pi m}(1-\exp\left(-c_n^2/200\right))}\le \frac{500}{\sqrt{m} c_n^2}\exp\left(-\frac{m c_n^2}{200}\right).
$$
Thus, Lemma \ref{mid_lemma} is proved.
\end{proof}

\section{Proof of the Theorem}\label{main proof}
\begin{proof}[Proof of Theorem \ref{main res}]
From Lemma \ref{connect} we know that
\begin{equation}\label{lem_result_theorem}
P_n(p) = \left( 1-\left( 1-\frac{c_n}{n}\right)^{n}\right)^{n-1} \mathbf{P}(S_k \geq 0, \ 0< k < n \mid S_{n} = -1 ).
\end{equation}
We also know that 
\begin{equation}\label{P_S_n sim}
\mathbf{P}(S_{n}=-1) \sim \frac{1}{\sqrt{2\pi n}},\quad n\to\infty.
\end{equation}
We need to find the asymptotic behavior of
$$
P_n:= \mathbf{P}(S_k\ge 0,\ 0<k<n, \ S_{n}=-1).
$$ 
We will prove parts 2 and 3 of the theorem by considering, for a properly chosen sequence $\{m_n,\ n\ge 1\}$, the random walk on three intervals 
$$
I_1 = [1,\dotsc,m_n], \quad I_2 = (m_n,\dotsc,n-m_n), \quad I_3 = [n-m_n,\dotsc,n-1].
$$
Then, using the obtained results, we will prove parts 1 and 4.

\subsection*{Proof of Case 2}
    
Consider the case $c_n \to c$. Set $m_n = n^{1/5}$. By virtue of Lemma~\ref{mid_lemma} 
\begin{multline*}
\mathbf{P}(S_k \ge 0, \ k\in I_1 \cup I_3;\  \exists \ l \in I_2 : S_l = -1 , \ S_{n} = -1 ) \le
\\
\le \ \mathbf{P}(\exists \ l \in I_2 : S_l = -1 , \ S_{n} = -1 )  = o\left(n^{-2}\right),\quad n \to \infty.
\end{multline*}
Thus, as $n \to \infty$
\begin{equation}\label{Ptilde}
\widetilde{P}_n := \mathbf{P}(S_k \ge 0, \ k\in I_1 \cup I_3, \ S_{n} = -1 ) = P_n + o\left(n^{-2}\right).
\end{equation}
Now we introduce the following notations for the probabilities
\begin{align*}
&P_{l}^{I_1} = \mathbf{P}(S_k \ge 0, \ k\in I_1, \ S_{m_n} = l),\\[1mm]
&P_{l,r}^{I_2} = \mathbf{P}(S_{n-m_n} = r \mid S_{m_n} = l),\\[1mm]
&P_{r}^{I_3} = \mathbf{P}(S_k \ge 0, \ k\in I_3, \ S_{n} = -1 \mid S_{n-m_n} = r).
\end{align*}
By the law of total probability we have
\begin{equation*}
    \widetilde{P}_n =
    \mathbf{P}(S_k \ge 0, \ k\in I_1 \cup I_3, \ S_{n} = -1 ) 
    =\sum _{l\geq 0}\sum_{r\geq0} P_{l}^{I_1}\, P_{l,r}^{I_2}\, P_{r}^{I_3}.
\end{equation*}
Moreover, $S_{n-m_n} - S_{m_n} + n -2m_n \sim Poiss(\mu_n)$, where 
$
\mu_n = \eta_{n,n-m_n}  -\eta_{n, m_n},
$ and
\begin{eqnarray}
m_n \leq \eta_{n, m_n} \leq \lambda_{n,1}m_n = O(m_n) = O(n^{1/5}),\quad n \to \infty, \label{mu_n_1}
\\  
n \geq \mu_n \geq n -m_n- \lambda_{n,1}m_n = O(n),\quad n \to \infty. \label{mu_n_2}
\end{eqnarray}
Since the maximum of $e^{-\mu} \mu^{k}/k!$ is attained at $k = \lfloor\mu\rfloor$ and using the inequality $ i!\geq \sqrt{i}(i/e)^i$, for any $l, r$ we have the estimate
\begin{equation}
\label{mu_n_3}
\begin{gathered}
P_{l,r}^{I_2} = \frac{e^{-\mu_n} \mu_n^{n-2m_n-( l-r)}}{( n-2m_n-( l-r))!}\le \frac{e^{-\mu_n} \mu_n^{\lfloor\mu_{n}\rfloor}}{\lfloor\mu_{n}\rfloor!}
\leq \\
\leq \frac{e^{-\mu_{n}} \mu_{n}^{\lfloor \mu_{n} \rfloor } }{\sqrt{\lfloor \mu_{n} \rfloor}\, e^{-\lfloor\mu_n \rfloor} \lfloor \mu_{n} \rfloor^{\lfloor \mu_{n} \rfloor }} 
\leq 
\frac{1}{\sqrt{\lfloor \mu_{n} \rfloor}},
\end{gathered}
\end{equation}
where in the last inequality we used the fact that the function $e^{-x} x^{\lfloor \mu_{n} \rfloor}$ attains its maximum at $x=\lfloor \mu_{n} \rfloor$.

Now, consider the value of $P_{l,r}^{I_2}$ for $l, r \le n^{2/5}$. Denote $\displaystyle n - \mu_n$ by $\displaystyle a$ and $\displaystyle 2m_n+( l-r)$ by $\displaystyle b$. Then, as $n\to\infty$, the quantities $a,b$ are of order $O(n^{2/5})$. Consequently, we obtain for $n\to\infty$
 \begin{multline}\label{Stirling proof}
\exp( -(n-a))\frac{(n-a)^{(n-b)}}{(n-b)!} = \frac{\exp(-(b-a))}{\sqrt{2\pi (n-b)}(1+o(1))}  \left(\frac{n-a}{n-b}\right)^{n-b} =
\\
=\frac{(1+o(1))}{\sqrt{2\pi n}} \exp(a-b) \exp \left((n-b) \ln\left( 1+\frac{b-a}{n-b}\right)\right) = 
\\
=\frac{1+o(1)}{\sqrt{2\pi n}} \exp(a-b) \exp \left((b-a) + O\left(\frac{(b-a)^{2}}{n-b}\right)\right) = \frac{1+o(1)}{\sqrt{2\pi n}}.
\end{multline}
Then one can assert that
\begin{equation}\label{decomposition}
    \sum _{l=0}^{n^{2/5}}\sum _{r=0}^{n^{2/5}} P_{l}^{I_1}\, P_{l,r}^{I_2}\, P_{r}^{I_3} =
    \frac{1+o(1)}{\sqrt{2\pi n}} \left(\sum _{l=0}^{n^{2/5}} P_{l}^{I_1} \right) \left(\sum_{r=0}^{n^{2/5}}  P_{r}^{I_3}\right).
\end{equation}
Using (\ref{mu_n_3}) to bound the maximum of $P_{l,r}^{I_2}$ and the estimates (\ref{mu_n_1}), (\ref{mu_n_2}), we obtain
\begin{multline}\label{p123}
\widetilde{P}_n - \sum _{l=0}^{n^{2/5}} \sum _{r=0}^{n^{2/5}} P_{l}^{I_1}\, P_{l,r}^{I_2}\, P_{r}^{I_3} \le \sum _{l=n^{2/5}}^{\infty} P_{l}^{I_1} \cdot \underset{l,r}{\max}\, P_{l,r}^{I_2} =
\\
=\mathbf{P}(S_{m_n} > n^{2/5}) \cdot \underset{l,r}{\max}\, P_{l,r}^{I_2}
\le \frac{\eta_{n,m_n} - m_n}{n^{2/5}} \lfloor \mu_{n} \rfloor^{-1/2} =  o(n^{-1/2}), \quad n\to \infty,
\end{multline}
where in the last inequality we used Markov's inequality. 
Now, apply Lemmas \ref{left_lemma} and \ref{right_lemma} to determine the probabilities
$$
Q_1 = \sum _{l=0}^{n^{2/5}} P_{l}^{I_1},\quad Q_3 = \sum _{r=0}^{n^{2/5}}  P_{r}^{I_3}.
$$

Let us start by determining $Q_1$.
We cannot directly apply Lemma \ref{left_lemma} to $S_k = \sum_{i=1}^{k} X_i$, where the $X_i$ are independent random variables such that $X_i + 1 \sim Poiss\left(\lambda_{n,i}\right)$, since the random walk is inhomogeneous. Consider the sequence $Y_i = X_i + Z_i$, where $Z_i \sim Poiss\left(\lambda_{n,1}-\lambda_{n,i}\right)$, and apply the lemma to $\widetilde{S}_k = \sum_{i=1}^{k} Y_i$. Moreover, when $c_n \to c$
\begin{equation}\label{cn_to_c1}
\lambda_{n,1} \ =\ \frac{c_n}{1-\left( 1-c_n/n\right)^{n}} = \frac{c}{1-e^{-c}} + o(1), \quad n \to \infty.
\end{equation}
Therefore, by Corollary \ref{left co} we have
\begin{equation}\label{left couple1}
\sum _{l=0}^{n^{2/5}} \mathbf{P}\left(\widetilde{S}_k \geq 0,\ k\in I_1,\  \widetilde{S}_{m_n} = l \right) = 1-e^{-c} + o(1), \quad n \to \infty.
\end{equation}
Furthermore,
$$
0\le \mathbf{P}(\widetilde{S}_k \geq 0,\ k\in I_1 )- \mathbf{P}(S_k \geq 0,\ k\in I_1 ) \le \mathbf{P}\left(\sum_{i=1}^{m_n} Z_i \neq 0\right),
$$
where $\sum_{i=1}^{m_n} Z_i \sim Poiss(m_n\lambda_{n,1} - \eta_{n,m_n})$, and 
\begin{equation*}
 m_n\lambda_{n,1} - \eta_{n,m_n} \le m_n( \lambda_{n,1} -\lambda_{n,m_n}) = m_n\frac{c_n \left( 1-\left( 1-c_n/n\right)^{m_n-1}\right)}{1-\left( 1-c_n/n\right)^{n}} \le C\,\frac{m_n^{2}}{n},
\end{equation*}
which tends to zero as $n\to\infty$.
Therefore,
\begin{equation}\label{left couple2}
\mathbf{P}(\widetilde{S}_k = S_k,\ \forall k\in I_1) = 1 + o(1), \quad n \to \infty.
\end{equation}
Thus, from (\ref{left couple1}) and (\ref{left couple2}) it follows that
\begin{equation}\label{left couple3}
Q_1 = \sum _{l=0}^{n^{2/5}} P_{l}^{I_1} = 1-e^{-c} + o(1), \quad n \to \infty.
\end{equation}

Now, let us determine $Q_3$. First, consider the dual random walk on the last interval by reversing the order of the steps and changing their sign, i.e. $\widetilde{X}_i = -X_{n-i+1}$. Then the desired probabilities can be expressed in terms of the random walk $\widetilde{S}_k = \sum_{i=1}^k  \widetilde{X}_i$:
\begin{multline*}
\mathbf{P}(S_k \geq 0, \ k\in I_3, \ S_{n} = -1 \mid S_{n-m_n} = r) =
\\
= \mathbf{P}\left(\sum_{i = 0}^{k}X_{n-i} < 0,\ k < m_n,\ \sum_{i = 0}^{m_n-1}X_{n-i} = -r-1\right) =
\\
=\mathbf{P}(\widetilde{S}_k > 0,\ 0<k\le m_n,\ \widetilde{S}_{m_n} = r+1). 
\end{multline*} 
Therefore, we can write
$$
Q_3 = \sum _{r=0}^{n^{2/5}}  P_{r}^{I_3} = \sum _{r=0}^{m_n} P_{r}^{I_3} = \mathbf{P}(\widetilde{S}_k > 0,\ 0<k \le m_n).
$$
Similarly to the previous reasoning, we can apply Lemma \ref{right_lemma} to the inhomogeneous random variables $Y_i$ by introducing (possibly on an extended probability space) independent collections $Z_i \sim Poiss(\lambda_{n,n-i} -\lambda_{n,n})$, for $i\le n$, and $1-Y_i \sim Poiss(\lambda_{n,n})$, for $i\le n$, such that 
$$
Y_i+Z_i = \widetilde{X}_i. 
$$
Then,
\begin{gather*}
\sum_{i=1}^{m_n} Z_i \sim Poiss(\eta_{n,n} - \eta_{n,n-m_n} - m_n\lambda_{n,n}) ,
\\
\eta_{n,n} - \eta_{n,n-m_n} - m_n\lambda_{n,n} < m_n (\lambda_{n,n-m_n} - \lambda_{n,n}) = \\
= m_n \left(1-\left( 1-c_n/n\right)^{m_n} \right)  \frac{c_n \left( 1-c_n/n\right)^{n-m_n}}{1-\left( 1-c_n/n\right)^{n}} \le C\,\frac{m_n^{2}}{n},
\end{gather*}
so that
$$
\mathbf{P}(\widetilde{X}_k = Y_k,\ k \le m_n) = \mathbf{P}\left(\sum_{i=1}^{m_n} Z_i = 0\right) = 1 + o(1), \quad n \to \infty.
$$
Thus,
\begin{equation}\label{right couple3}
Q_3 = \sum _{r=0}^{n^{2/5}}  P_{r}^{I_3} = (1-\lambda_{n,n}) + o(1), \quad n \to \infty. 
\end{equation}
Since for $c_n \to c$, $n\to\infty$
\begin{equation*}
    \lambda_{n,n} = \frac{c_n \left( 1-c_n/n\right)^{n-1}}{1-\left( 1-c_n/n\right)^{n}} \sim \frac{c \, e^{-c}}{1-e^{-c}}, \quad n\to\infty,
\end{equation*}
substituting (\ref{left couple3}) and (\ref{right couple3}) into (\ref{decomposition}) and using (\ref{p123}), we obtain
\begin{equation}\label{Pn_tilda_final}
\widetilde{P}_n = \frac{(1+o(1))}{\sqrt{2\pi n}} (1-e^{-c})\left(1- \frac{c \, e^{-c}}{1-e^{-c}}\right), \quad n \to \infty.
\end{equation}
Substituting (\ref{Pn_tilda_final}) into (\ref{Ptilde}) and taking into account (\ref{lem_result_theorem}) and (\ref{P_S_n sim}), we obtain the desired result:
\begin{equation*}
P_n(p) \sim (1-e^{-c})\left(1- \frac{c \, e^{-c}}{1-e^{-c}}\right)
\left( 1-\left( 1-\frac{c}{n}\right)^{n}\right)^{n-1}, \quad n \to \infty.
\end{equation*}

\subsection*{Proof of Case 3}
Consider the case when $c_n\to 0$, $ c_n n^{1/2}/\ln n \to +\infty$, $n\to \infty$.  
Fix $m_n$ such that 
$$r_n = m_n c_n^2/|\ln c_n|\to +\infty, \quad m_n c_n = o(\sqrt{n}), \quad n\to\infty.$$ 
\\
(i) We first bound $P_n$ from above. To do this, note that
\begin{multline} \label{part3_1}
P_n = \mathbf{P}(S_k\ge 0, \, 0<k<n, \, S_{n}=-1) \le 
\sum_{l,r\ge 0} \mathbf{P}(S_{k}\ge 0, \, k\in I_1, \, S_{m_n} = l)\times  
\\ \times 
\mathbf{P}(S_{n-m_n}-S_{m_n}=r-l) \, \mathbf{P}(S_{k} - S_{n} > 0, \, k\in I_3, \, S_{n-m_n} - S_{n} = r+1). 
\end{multline}
Moreover, by the local limit theorem (Theorem 2.3,~\cite{BorLoc})
\begin{multline}
\label{Local}
\mathbf{P}(S_{n-m_n}-S_{m_n}=r-l) = \frac{1}{\sqrt{2\pi (\eta_{n,n-m_n}-\eta_{n,m_n})}}\times\\ \times\left(\exp\left(-\frac{(r-l-\eta_{n,n-m_n}+\eta_{n,m_n}+n-2m_n)^2}{2(\eta_{n,n-m_n}-\eta_{n,m_n})}\right)+o(1)\right),\quad n\to\infty,\nonumber
\end{multline}
with the $o(1)$ uniformly small in $l,r\in \mathbb{N}\cup\{0\}$.  
To apply the theorem, we verify its conditions. The characteristic function of one summand $X_{n,i}$ satisfies
$$
\psi_{n,i} = \exp(\lambda_{n,i} (e^{it}-1) - it),\quad
|\psi_{n,i}|=\exp(\lambda_{n,i}(\cos t - 1)).
$$
Since the sequences $\{\lambda_{n,i},n\ge 1\}$ converge uniformly in $i$ to $1$ as $n\to\infty$, the modulus of these characteristic functions is uniformly bounded away from one for $t\in [\varepsilon, 2\pi-\varepsilon]$. Hence, condition $Z$ of the theorem holds. Conditions (2.3) and (2.4) of that theorem follow from the convergence of $\lambda_{n,i}$ to one. Next, we check condition (UI), which is as follows: for any $\varepsilon$ there exists $M$ such that for all sufficiently large $n$ 
$$
\max_{i,n}\frac{1}{\lambda_{n,i}} {\bf E}(X_{n,i}^2; X_{n,i}>M)\le \varepsilon. 
$$
Since $X_{n,1}$ stochastically dominates $X_{n,i}$ for $i\le n$, we have
$$
\frac{1}{\lambda_{n,i}} {\bf E}(X_{n,i}^2; |X_{n,i}-\lambda_{n,i}|>M)\le \frac{1}{\lambda_{n,n}} {\bf E}(X_{n,1}^2; X_{n,1}>M).
$$
The right-hand side, by the monotone convergence theorem, tends as $n\to\infty$ to 
$$
{\bf E}(X^2; X>M),
$$
where $X\sim Poiss(1)$, and thus can be made arbitrarily small by choosing $M$ sufficiently large. Hence, relation (\ref{Local}) holds.  
Note that the same result can be obtained by a direct application of Stirling's formula, analogous to the reasoning in (\ref{Stirling proof}).  
Furthermore, 
$
\eta_{n,n-m_n} - \eta_{n,m_n} \sim n,\ n\to\infty,
$ since
$$
\frac{\eta_{n, n-m_n} - \eta_{n, m_n}}{n-2m_n} = 
\frac{\lambda_{n,m_n+1}+\dotsb+\lambda_{n,n-m_n}}{n-2m_n}\to 1,\quad n\to\infty,
$$
by the uniform convergence of $\lambda_{n,i}$ to one. Thus, by bounding above the right-hand side of (\ref{part3_1}), we obtain
\begin{equation}\label{c_to0_upper}
P_n \leq \frac{1+o(1)}{2\pi \sqrt{n}} \, \mathbf{P}(S_k\ge 0, \, k\in I_1) \, \mathbf{P}(S_{k}-S_n > 0, \, k \in I_3). 
\end{equation}
\\
Moreover, the probability 
$$
\mathbf{P}(S_k\ge 0, \, k\in I_1)
$$
is bounded above by the corresponding probability for a random walk $\{S_{k,1}^{(u)}\}$ with steps $Poiss(\lambda_{n,1})$ and bounded below by that for a random walk $\{S_{k,1}^{(l)}\}$ with steps $Poiss(\lambda_{n,m_n})$. Here we have used the stochastic domination of $X_{n,i}$ by $X_{n,1}$ and of $X_{n,i}$ by $X_{n,m_n}$ for any $i\in I_1$. However, by Lemma~\ref{left_lemma} the first of these probabilities is equivalent to $2(\lambda_{n,1}-1)$ and the second to $2(\lambda_{n,m_n}-1)$. It remains to note that as $n\to\infty$
\begin{equation}\label{lambdan-1}  
    \begin{gathered}
    \lambda_{n,1}-1 = \frac{c_n}{1-\left(1-c_n/n\right)^{n}} - 1 = \frac{c_n}{c_n - c_n^2/2+o(c_n^2)} - 1  = \frac{c_n}{2} + O(c_n^2), \\[1mm]
\lambda_{n,m_n}-1 = \frac{c_n\left(1-c_n/n\right)^{m_n}}{1-\left(1-c_n/n\right)^{n}} - 1 = \frac{1- m_{n}c_n/n+o(c_n)}{1-c_n/2+o(c_n)} - 1 =\frac{c_n}{2}+ O(c_n^2).
    \end{gathered}
\end{equation}
Hence,
\begin{equation}\label{left_result2}
\mathbf{P}(S_{k}\ge 0, \, k\in I_1)\sim c_n,\quad n\to\infty.
\end{equation}
Similarly, the estimate 
\begin{equation}\label{right_result2}
\mathbf{P}(S_{k} - S_{n} > 0, \, k\in I_3)\sim \frac{1}{2} c_n,\quad n\to\infty,
\end{equation}
is proved analogously using Lemma \ref{right_lemma} and the relations
\begin{equation}
\begin{gathered}
1-\lambda_{n,n} = 1- \frac{c_n\left(1-c_n/n\right)^n}{1-\left(1-c_n/n\right)^{n}}   = \frac{c_n}{2} + O(c_n^2),\quad n\to\infty,\\[1mm]
1-\lambda_{n,n-m_n} = 1-\frac{c_n\left(1-c_n/n\right)^{n-m_n}}{1-\left(1-c_n/n\right)^{n}}   = \frac{c_n}{2}  + O(c_n^2),\quad n\to\infty.
\label{lambdann-1}
\end{gathered}
\end{equation}
Substituting (\ref{left_result2}) and (\ref{right_result2}) into (\ref{c_to0_upper}), we obtain
\begin{equation}\label{c_to0_sup1}
\limsup_{n\to\infty} \frac{2\sqrt{2\pi n}\, P_n}{c_n^2} \le 1.
\end{equation}
\\
(ii) Next, we bound $P_n$ from below 
\begin{multline} \label{2ii}
P_n \geq
\sum_{l,r\le 2 m_n c_n}
\mathbf{P}(S_k\ge 0, \, k \in I_1, \, S_{m_n}=l)\; \mathbf{P}(S_{n-m_n}-S_{m_n}=r-l) \times \\
\mathbf{P}(S_{k} - S_{n} > 0, \, k\in I_3, \, S_{n-m_n} - S_{n} = r+1) - 
\mathbf{P}(\exists i\in I_2: S_{i} = -1, \, S_{n}=-1).
\end{multline}
By Lemma \ref{mid_lemma}, applied with $m = m_n$,
the subtracted term is bounded above by
\begin{equation}
\label{EstPnLower}
\frac{500}{c^2_n \sqrt{2\pi m_n n}} \exp\left(-\frac{m_n c_n^2}{200}\right) = \frac{500\, c_n^{r_n/200-2}}{\sqrt{2\pi m_n n}} = o\left(\frac{c_n^2}{\sqrt{n}}\right),\quad n\to\infty.
\end{equation}
Note that by (\ref{lambdan-1}) and (\ref{lambdann-1})
\begin{eqnarray*}
\eta_{n,m_n}-m_n  = \frac{m_n c_n}{2} + O(m_n c_n^2),\quad n\to\infty, \\[1mm]
n-\eta_{n,n-m_n}-m_n = -\frac{m_n c_n}{2} + O(m_n c_n^2),\quad n\to\infty,
\end{eqnarray*}
whence 
$$
n-2m_n - \eta_{n,n-m_n}+\eta_{n,m_n} = O(m_n c_n^2) = o(\sqrt{n}),\quad n\to\infty. 
$$
Thus, by (\ref{Local}) 
\begin{equation}
\label{LocalLimitr-l}
\mathbf{P}(S_{n-m_n}-S_{m_n}=r-l)=\frac{(1+o(1))}{\sqrt{2\pi n}},\quad n\to\infty,
\end{equation}
with $o(1)$ uniformly small for $l,r\le 2m_n c_n$, since 
$$
(n-2m_n - \eta_{n,n-m_n}+\eta_{n,m_n}+r-l)^2 =  o(n) = o(\eta_{n,n-m_n}-\eta_{n,m_n}),\quad n\to\infty.
$$
Substituting (\ref{LocalLimitr-l}) and (\ref{EstPnLower}) into (\ref{2ii}), we obtain
\begin{equation}
\label{Liminf}
\liminf_{n\to\infty} \frac{2\sqrt{2\pi n}\, P_n}{c_n^2} \ge \liminf_{n\to\infty} \frac{2 Q_{n,1} Q_{n,2}}{c_n^2},
\end{equation}
where 
\begin{eqnarray}
Q_{n,1} = \mathbf{P}(S_{k}\ge 0, \, k \in I_1,\ S_{m_n} \le 2m_n c_n), \label{Qn1}\\[1mm]
Q_{n,2} = \mathbf{P}(S_{k} - S_{n} > 0, \, k \in I_3, \, S_{n-m_n} - S_{n} \le 2m_n c_n). \label{Qn2}
\end{eqnarray}
Moreover, for any positive $h$ we have
\begin{gather*} \label{Smn>2mc}
\mathbf{P}(S_{m_n}>2m_n c_n) \le
e^{-h m_n - 2h m_n c_n} {\bf E}e^{h (S_{m_n}+m_n)} = 
\\[1mm]
= e^{\eta_{n,m_n}\left(e^h-1\right)-h m_n -2h m_n c_n}.
\end{gather*}
Since
\begin{gather*}
\eta_{n,m_n} \left(e^h-1\right) - h m_n = m_n \left(e^{h}-1-h\right) + (1+o(1)) \frac{m_n c_n}{2} \left(e^h-1\right),\quad n\to\infty,
\end{gather*}
by taking $h=c_n$ we obtain
\begin{gather*}
\eta_{n,m_n}\left(e^h-1\right)-h m_n -2h m_n c_n =\\ = m_n c_n^2 \left(\frac{e^{c_n}-1-c_n}{c_n^2}+\frac{(1+o(1)) \left(e^{c_n}-1\right)}{2c_n}-2\right)
\sim r_n \ln c_n ,\quad n\to\infty.
\end{gather*}
Hence, for all sufficiently large $n$
$$
\mathbf{P}(S_{m_n}>m_n c_n) \le e^{2\ln c_n}=o(c_n),\quad n\to\infty.
$$
Therefore, for $c_n\to 0$, $n\to\infty$, we have from (\ref{Qn1})
\begin{equation}\label{finalQn1}
Q_{n,1} = \mathbf{P}(S_{k}\ge 0, \, k \in I_1) + o(c_n) = c_n+o(c_n),
\end{equation}
where in the last step we used (\ref{left_result2}).  
Similarly, using (\ref{right_result2}), we obtain from (\ref{Qn2})
\begin{equation}
\label{finalQn2}
Q_{n,2} = \frac{1}{2}c_n + o(c_n),\quad n\to\infty.
\end{equation}
Using (\ref{finalQn1}) and (\ref{finalQn2}) in (\ref{Liminf}), we obtain
\begin{equation}\label{c_to0_sup2}
\liminf_{n\to\infty} \frac{2 \sqrt{2\pi n}\, P_n}{c_n^2}\ge 1.
\end{equation}
From (\ref{c_to0_sup1}) and (\ref{c_to0_sup2}) it follows that $P_n\sim c_n^2/(2\sqrt{2\pi n})$, hence
\begin{equation}\label{final3}
\mathbf{P}(S_k\ge 0, \, 0<k<n \mid S_{n}=-1) \sim \frac{1}{2} c_n^2,\quad n\to\infty.
\end{equation}
Substituting (\ref{final3}) into (\ref{lem_result_theorem}), we obtain
\begin{equation*}
P_n(p) \sim \frac{1}{2} c_n^2 \left( 1-\left( 1-\frac{c_n}{n}\right)^{n}\right)^{n-1},\quad n\to \infty,
\end{equation*}
thus completing the proof of part 3.

\subsection*{Proof of Case 1}

For the case $c_n \to +\infty,\ n \to \infty$ we show that
$$\mathbf{P}(S_k \geq 0, \ 0< k < n\mid S_{n} = -1) \to 1,\ n\to \infty.$$ 
For any $\varepsilon\in (0,1)$ we can find a parameter $c(\varepsilon)\in (0,+\infty)$ such that 
$$1-\varepsilon = (1-e^{-c})\left(1-\frac{c}{e^c-1}\right)=(1 - e^{-c}(1 + c)).$$ 
This can be done because the function on the right-hand side is continuous and monotonically increasing, taking the value zero at zero and tending to one at infinity. For some natural number $N$ and all $n>N$ the inequality $c_n > c(\varepsilon)$ holds. Consider the random walk $\{\widetilde{S}_k, k\ge 0\}$
with independent steps $\widetilde{X}_i -1 \sim Poiss(\widetilde{\lambda}_{n,i})$, $i\le n$, where the sequence $\{\widetilde{\lambda}_{n,i}\}$ is defined by relation (\ref{lambda}) with $c_n = c(\varepsilon)$.  
By Lemma \ref{StocDomLemm} we have
$$
\mathbf{P}(S_k \geq 0, \ 0< k < n\mid S_{n} = -1) \geq \mathbf{P}(\widetilde{S}_k \geq 0, \ 0< k < n\mid \widetilde{S}_{n} = -1).
$$
Hence, by part 2 of the present theorem
$$
\liminf_{n\to\infty} \mathbf{P}(S_k \geq 0, \ 0< k < n\mid S_{n} = -1) \ge 1-\varepsilon.
$$
Since $\varepsilon$ is arbitrary, it follows that
$$P_n(p) \sim \left( 1-\left( 1-\frac{c_n}{n}\right)^{n}\right)^{n-1},\quad n \to \infty.
$$
\subsection*{Proof of Case 4}
Now, consider the case when $c_n = o(1/n), \ n \to \infty$. Consider the sequence $\widetilde{S}_k = \sum_{i=1}^k  \widetilde{X}_i$, where $\widetilde{X}_i + 1 \sim Poiss(1)$. Then by Lemma~\ref{lemma1}
\begin{equation*}
    \mathbf{P}\left(\widetilde{S}_k \geq 0 , k<n , \widetilde{S}_{n} = -1 \right) = \frac{1}{n}\mathbf{P}\left(\widetilde{S}_{n} = -1 \right).
\end{equation*} 
Apply Lemma \ref{StocDomLemm} to $S_k$ and $\widetilde{S}_{k}$. Since 
\begin{equation*}
\frac{\sum_{j=1}^{i} \lambda_{n,j}}{\sum_{j=1}^{n}\lambda_{n,j}} = \frac{\eta_{n,i}}{ n} = \frac{1-(1-c_n/n)^i}{1-(1-c_n/n)^n}
\ge \frac in,\quad i \le n,
\end{equation*}
we obtain the lower bound
\begin{equation}\label{th4below}
\mathbf{P}\left(\left.S_k \geq 0 , k<n \right|S_{n} = -1 \right) \geq \mathbf{P}\left(\left.\widetilde{S}_k \geq 0 , k<n \right| \widetilde{S}_{n} = -1 \right) =  \frac{1}{n}.
\end{equation}
Consider the random walks $\widehat{S}_k = \sum_{i=1}^k \widehat{X}_i$ and $S_k^* = \sum_{i=1}^k (\widetilde{X}_i +\widehat{X}_i )$, $k\ge 0$, where $\widetilde{X}_i + 1\sim Poiss(1)$ and $\widehat{X}_i\sim Poiss(\lambda_{n,i}/\lambda_{n,n}-1)$, $i\le n$, are independent sequences. Then $\widetilde{X}_i +\widehat{X}_i + 1 \sim  Poiss(\lambda_{n,i}/ \lambda_{n,n})$.
Therefore, by Remark \ref{StocDomRem}
\begin{equation}\label{*equality}
\mathbf{P}\left(\left.S_k \geq 0 , k<n \right| S_{n} = -1 \right) = \mathbf{P}\left(\left.S^*_k \geq 0 , k<n \right| S^*_{n} = -1 \right).
\end{equation}
Note that $\widetilde{S}_n  + n \sim Poiss(n)$, $ \widehat{S}_n \sim Poiss(a_n)$, and $S^*_n + n \sim Poiss(n + a_n)$, where 
\begin{equation}\label{an_small}
a_n = \eta_{n,n}/ \lambda_{n,n} - n \le n \lambda_{n,1} / \lambda_{n,n} - n  \sim n c_n = o(1), \quad n \to \infty.
\end{equation}
Then 
\begin{equation}\label{an_ratio}
\mathbf{P}(S^*_n = -1) / \mathbf{P}(\widetilde{S}_n = -1) = \exp(a_n)\frac{(n+a_n)^{n-1}}{n^{n-1}} = 1 + o(1), \quad n\to\infty.
\end{equation}
Using Lemma \ref{lemma1} for the random walk $i+\widetilde{S}_k$, we obtain 
\begin{multline}\label{an_main}
    \mathbf{P}\left(S^*_k \geq 0 , k<n , S^*_{n} = -1 \right) = \sum_{i=0}^{n} \mathbf{P}\left(S^*_k \geq 0 , k<n , S^*_{n} = -1, \widehat{S}_n = i \right)  \le 
    \\
    \le \sum_{i=0}^{n} \mathbf{P}\left(\widehat{S}_n = i\right) \mathbf{P}\left(i+\widetilde{S}_k \geq 0,\  k<n,\  i+\widetilde{S}_n = -1 \right) = 
    \\
    = \sum_{i=0}^{n} e^{-a_n}\frac{a_n^i}{i!}\frac{i+1}{n}\mathbf{P}(i+\widetilde{S}_n = -1)  
    \leq \frac{e^{-a_n}}{n}\mathbf{P}(\widetilde{S}_n = -1) \left(1+2\sum_{i=1}^{n} a_n^i \right),
\end{multline}
where in the last inequality we used the relation
$$
\mathbf{P}(i+\widetilde{S}_n = -1)\le\mathbf{P}(\widetilde{S}_n = -1),
$$
which holds for all $i\in \{0,1,\dotsc,n\}$. Using (\ref{an_small}) and (\ref{an_ratio}), from (\ref{an_main}) we obtain the upper bound
\begin{equation}\label{th4up}
    \limsup_{n\to\infty} n\,\mathbf{P}\left(\left.S^*_k \geq 0 , k<n \right| S^*_{n} = -1 \right) \leq 1,\quad n \to \infty.
\end{equation}
Using (\ref{th4below}), (\ref{*equality}), and (\ref{th4up}), we deduce
$$
\lim_{n\to\infty} n\,\mathbf{P}\left(S_k \geq 0 , k<n \mid S_{n} = -1 \right) 
 = 1,
$$
from which the required assertion follows.

\end{proof}

\section{Conclusion}\label{conclusion}
In this paper, we propose an approach for analyzing the connectivity probability of an Erdős–Rényi graph based on the theory of inhomogeneous random walks. This method avoids the laborious combinatorial work required for each individual case arising from the dependence of the edge probability on the graph size \(n\).

The method can be applied in a broader range of situations. In future work, we plan to demonstrate how the presented approach can be used to develop a fast method for generating Erdős–Rényi graphs conditioned on connectivity in the sparse regime. This will open up opportunities for more efficient modeling and investigation of the properties of connected random graphs. Furthermore, in subsequent studies, we intend to apply our method to the analysis of random bipartite graphs. It is expected that the developed approach will yield new results in the theory of random graphs and deepen our understanding of their properties.

\section*{Acknowledgments}
The authors thank D.A. Shabanov and A.M. Raigorodskii for their valuable comments and discussions. We are extremely grateful to the anonymous referee for the meticulous work whose remarks allowed us to correct several shortcomings and errors.

\section*{Submission Note}
This is a preprint version of the article. It is under review for publication in \textit{Theory of Probability and its Applications}.

\bibliography{lit_eng}

\end{document}